\def\pa{\partial}
\def\i1n{i=1,\cdots,n}
\def\j1n{j=1,\cdots,n}
\def\ij1n{i,j=1,\cdots,n}
\newlength\figureheight
\newlength\figurewidth
\newtheorem{assum}{Assumption}
\newtheorem{coro}{Corollary}
\newtheorem{remark}{Remark}
\newtheorem{lem}{Lemma}
\newcommand{\ba}{\begin{align}}
\newcommand{\ea}{\end{align}}
\newcommand{\fr}{\frac}
\newcommand{\gm}{\gamma}
\newcommand{\alp}{\alpha}
\newtheorem{thm}{Theorem}
\title{ Stabilization of Filament Production Rate for Screw Extrusion-Based  Polymer 3D-Printing}
\author{Shumon Koga 
    \affiliation{
	Mechanical and Aerospace Engineering\\
	University of California\\
	San Diego, California 92093\\
    Email: skoga@eng.ucsd.edu
    }	
}
\author{David Straub 
    \affiliation{ Institute for System Dynamics\\
     University of Stuttgart\\
      Email: st101167@stud.uni-stuttgart.de

    }
}
\author{Mamadou Diagne  
    \affiliation{ Mechanical Aerospace and Nuclear Engineering\\ 
     Rensselaer Polytechnic Institute\\
      Troy, New York 12180 \\
      Email: diagnm@rpi.edu

    }
}
\author{Miroslav Krstic   
    \affiliation{ 
    Mechanical and Aerospace Engineering\\
	University of California\\
	San Diego, California 92093\\
    Email: krstic@ucsd.edu
    }
}
\begin{document}

\maketitle    

%%%%%%%%%%%%%%%%%%%%%%%%%%%%%%%%%%%%%%%%%%%%%%%%%%%%%%%%%%%%%%%%%%%%%%
\begin{abstract}
{\it Polymer 3D-printing has been commercialized rapidly during recent years, however, there remains a matter of improving the manufacturing speed. Screw extrusion has a strong potential to fasten the process through simultaneous operation of the filament production and the deposition. This paper develops a control algorithm for screw extrusion-based 3D printing of thermoplastic materials through an observer-based output feedback design. We consider the thermodynamic model describing the time evolution of the temperature profile of an extruded  polymer by means of a  partial differential equation (PDE) defined on the time-varying domain. The time evolution of the spatial domain is governed by an ordinary differential equation (ODE) that reflects the dynamics of  the position of the phase change interface between polymer granules and molten polymer deposited as a molten filament. Steady-state profile of the distributed temperature along the extruder is obtained when the desired setpoint for the interface position is prescribed. To enhance the feasibility of our previous design, we develop a PDE observer to estimate the temperature profile via measured values of surface temperature and the interface position. An output feedback control law considering a cooling mechanism at the boundary inlet as an actuator is proposed. In extruders the control of raw material temperature is commonly achieved using preconditioners as part of  the inlet feeding mechanism. For some given screw speeds that correspond to slow and fast operating modes, numerical simulations are conducted to prove the performance of the proposed controller.  The convergence of the interface position to the desired  setpoint is achieved under physically reasonable temperature profiles. 
}
\end{abstract}

%%%%%%%%%%%%%%%%%%%%%%%%%%%%%%%%%%%%%%%%%%%%%%%%%%%%%%%%%%%%%%%%%%%%%%
%\begin{nomenclature}
%\entry{A}{You may include nomenclature here.}
%\entry{$\alpha$}{There are two arguments for each entry of the nomemclature environment, the symbol and the definition.}
%\end{nomenclature}

%The primary text heading is  boldface and flushed left with the left margin.  The spacing between the  text and the heading is two line spaces.

%%%%%%%%%%%%%%%%%%%%%%%%%%%%%%%%%%%%%%%%%%%%%%%%%%%%%%%%%%%%%%%%%%%%%%
\section{Introduction}

On the verge of new manufacturing techniques, additive manufacturing stands out as versatile tool for high flexibility and fast adaptability in production. It is applicable in a variety of producing industries, ranging from tissue engineering \cite{mironov2003}, thermoplastics
\cite{valkanaers2013}, metal \cite{ladd2013} and ceramic
\cite{seitz2005} fabrication. One of the most popular types of 3D
printing is Fused Deposition Modeling (FDM) \cite{mohamed2015}, which
uses filaments as
raw material, that have to be precisely manufactured to achieve a good final product quality \cite{bukkapatnam2007}.

From the polymer processing and extrusion cooking industry, screw
extruders are well-known devices. Results stated in \cite{morton1989polymer,tadmor2006principles,kulshreshtha1995,li2001extruder}
give an in-depth description of screw geometrics, extruder setups
and describe the dynamics of extrusion process consisting of a  conveying zone, a melting
zone, and a mixing zone. A mathematical description of such a model is derived by mass, momentum and energy balances and appears as coupled transport equations coupled through a moving interface. This model is used in \cite{li2001extruder} to describe  an extrusion cooking process. The boundary control of  a similar model is achieved  in  \cite{diagne2015boundary,diagne2016boundary} under the assumption of constant viscosity.

More recent contributions considered screw  extrusion as a useful technology for  3D printing
application \cite{valkanaers2013,drotman2016control,diagne2017} allowing to manufacture a wider variety of materials than FDM, while using polymer granules as raw
material \cite{valkanaers2013}. In
\cite{diagne2017}, a time-delay control was developed on a model consisting of  two phases similarly to \cite{li2001extruder}. In both cases,  stabilization of the moving interface separating a conveying and  a melting zone is achieved with a fast convergence rate. Another approach which enables  to control  screw extruders in 3D
printing is proposed in
\cite{drotman2016control}, where an energy-based model is established, simplifying the implementation of the control law and circumventing difficulties with state measurement. In other words, the control of the outflow rate at  the nozzle only relies on the measurement of the heater current and the screw speed.

In the screw extrusion process, solid material is convected from the feed to the nozzle located at the end of  a heating chamber.  The solid raw material is melted and mixed before
being  expelled through the nozzle as a thin filament. For these process, the thermal behavior is an important factor which characterize final product quality. In fact, heat is supplied into the system  by the heaters  surrounding the  extruder's  barrel on the one hand and by the viscous heat  generation due to a  shearing effect  \cite{li2001extruder} on the other hand. 
%Since the viscous  heat generation mainly depends on the screw speed and geometry, the heat exchanged with the barrel has a pre-dominant effect on the  thermal behavior  of the extruded  material, for a uniform screw which operates at slow speed. 
The process of the phase transition from solid to liquid polymer can be described as a Stefan problem \cite{gupta}. In this context, the dynamics of the solid-liquid phase interface is derived from the energy conservation in which the latent heat required for melting
is driven by the internal heat of the liquid phase, resulting in the interface velocity to be proportional to the temperature gradients of the
adjacent phase. For instance, in \cite{escobedo2013classical}, the Stefan problem for a polymer crystallization process is described, and the analytical crystallization time is derived.

From a control perspective, the boundary stabilization of the interface position for the one-phase Stefan problem was recently developed in \cite{Shumon16}--\cite{koga_2019delay} based on the "backstepping method"  \cite{miroslav08, miroslav09}. More precisely,  in \cite{Shumon17journal}, the observer-based output feedback control was designed via a nonlinear backstepping transformation and the exponentially stabilization of the closed-loop system was proved without imposing any {\em a priori} assumption. Similarly, in \cite{koga2017backstepping} the full-state feedback control design for the one-phase Stefan problem with flowing liquid was achieved enabling to exponentially stabilize the system to a constant steady-state. The application of the backstepping method for Stefan problem to the aforementioned 3D-printing process was covered in \cite{koga2018polymer} by developing the thermodynamic model including the effects of screw speed and the barrel temperature, and designing the full-state feedback control law to stabilize the ratio between the polymer granules and melt polymer. 
%The problem considered in this controbution is different since the contribution of the barrel introduces in the system dynamics a reaction term that leads to a distributed steady state profile.

This paper extends the results in \cite{koga2018polymer} by: 
\begin{itemize} 
\item $\bullet$ developing a PDE observer to estimate the temperature profile of the solid polymer granules with utilizing available measurements, 
\item $\bullet$  designing the associated observer-based output feedback control law and proving the stability of closed-loop system,   
\item $\bullet$  and verifying the performance of the designed observer and output feedback control law in numerical simulation.  	
\end{itemize}

First, the thermodynamic model of the polymer granules and melt polymer in screw extrusion is described, and the steady-state temperature profiles given by a prescribed setpoint of the interface position are analytically solved as in \cite{koga2018polymer}. Second, a PDE observer is constructed as a copy of the plant plus the measurement error of the surface temperature multiplied by a constant observer gain. The designed observer is shown to be exponentially convergent to the granular pellets' temperature profile along the heating chamber of the extruder. Third, using the designed observer, the output feedback control law for the boundary heat flux to stabilize the interface position at the desired setpoint is derived based on a similar manner to the full-state feedback design in \cite{koga2018polymer}, and the stability of the closed-loop system is proved under some realistic assumptions. Finally, simulation results are provided to illustrate the desired performance of the designed observer and the output feedback control design for some given screw speeds that correspond to slow and fast operating extrusion process. 

This paper is organized as follows. The thermodynamic model of the screw extruder is developed in Section \ref{model}, and the steady-state analysis is provided in Section \ref{ssa}. The observer design is presented in Section \ref{sec:observer}, and the associated output feedback control design is derived in Section \ref{controldesign}. The stability proof of the entire closed-loop system for a specific setup is established in Section \ref{sec:theo}. Simulation results are presented in order to analyze the controller's performance  in Section \ref{simulation}. We complete the paper in Section
\ref{conclusion} with concluding remarks.

%%%%%%%%%%%%%%%%%%%%%%%%%%%%%%%%%%%%%%%%%%%%%%%%%%%%%%%%%%%%%%%%%%%%%%
\section{Thermodynamic Model of Screw Extruder}\label{model}

\begin{figure}[t]
\centering
\includegraphics[width=2.0in]{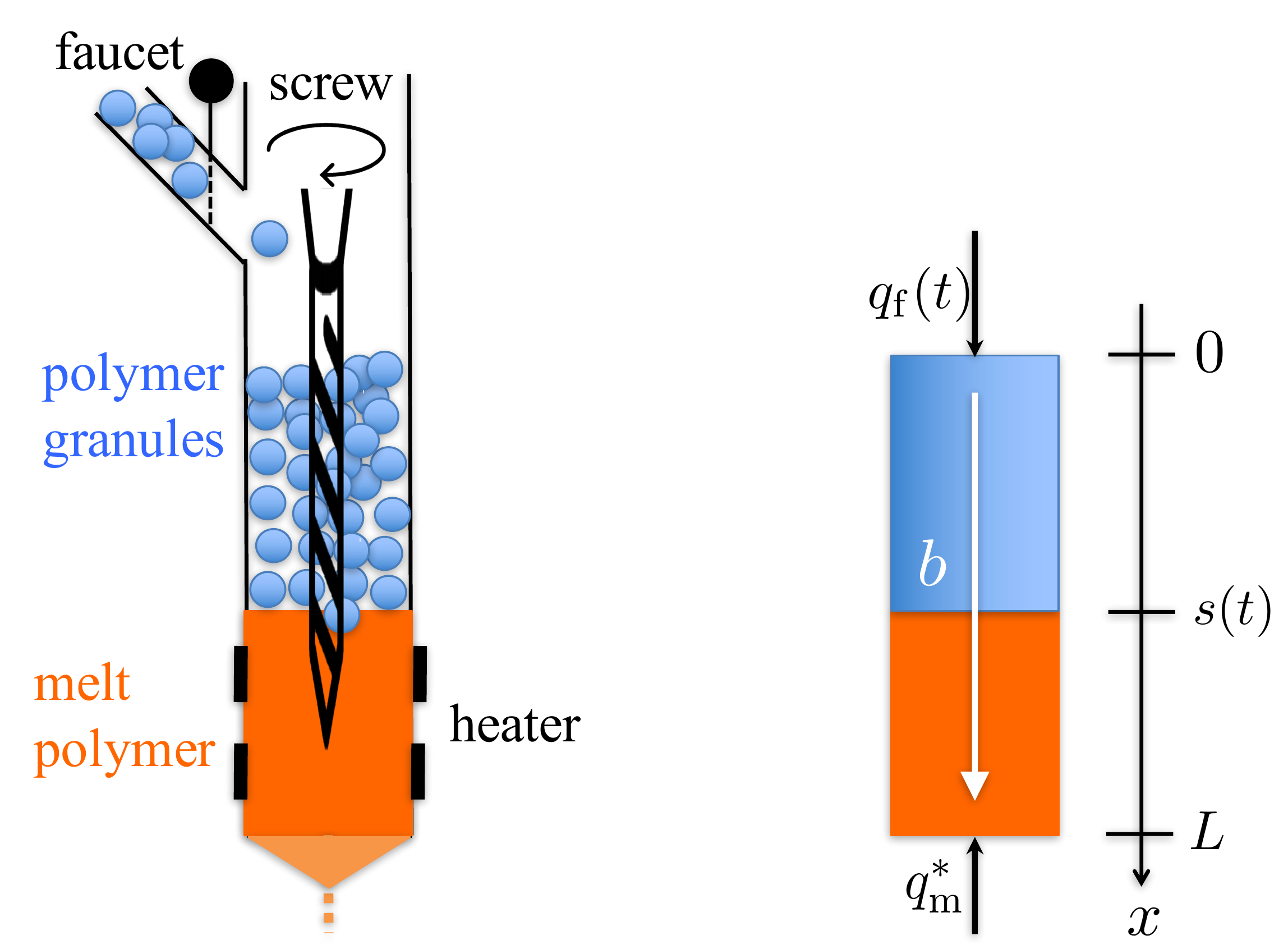}\\
\caption{Schematic of screw extruder for original description (left) and model description (right). }
\label{fig:stefan}
\end{figure}

We focus on the thermodynamic model of the screw extrusion process in one-dimensional coordinate along the vertical axis, motivated by \cite{tadmor2006principles} which developed a thermodynamic phase change model for polymer processing. The model provides the time evolution of the temperature profile of the extruded material and the interface position between the feeded polymer granules and the molten polymer. The granular pellets  are conveyed by the screw rotation at a given speed $b$ along the vertical axis while the  barrel  temperature is uniformly maintained at $T_{{\rm b}}$. Defining $T_{{\rm s}}(x,t)$ and $T_{{\rm l}}(x,t)$ as the  temperature profiles of solid phase (polymer granules) over the spatial domain $x \in (0,s(t))$  and liquid phase (molten polymer) over the spatial domain $x \in (s(t),L)$, respectively, the following thermodynamical  model
\begin{align}\label{sys1}
\fr{\pa T_{{\rm s}}}{\pa t}(x,t) =& \alp_{{\rm s}} \fr{\pa^2 T_{{\rm s}}}{\pa x^2}(x,t) - b \fr{\pa T_{{\rm s}}}{\pa x}(x,t)  \notag\\
& + h_{{\rm s}} \left( T_{{\rm b}} -  T_{{\rm s}}(x,t) \right), \textrm{for} \quad 0<x<s(t), \\
\label{sys2}\fr{\pa T_{{\rm l}}}{\pa t}(x,t) =& \alp_{{\rm l}} \fr{\pa^2 T_{{\rm l}}}{\pa x^2}(x,t) - b \fr{\pa T_{{\rm l}}}{\pa x}(x,t)  \notag\\
& + h_{{\rm l}} \left( T_{{\rm b}} - T_{{\rm l}}(x,t) \right), \textrm{for} \quad s(t)<x<L
\end{align} 
is derived  from the energy conservation and heat conduction laws. In this paper, we consider the temperature distribution in the liquid to be static as stated in \eqref{ss1} and in Assumption \ref{ass:ss} (see Section \ref{sec:error}). Here, $\alp_{i} = \fr{k_{i}}{\rho_i c_i}$ and $h_{i} = \fr{\bar{h}_{i}}{\rho_i c_i}$, where $\rho_i$, $c_i$, $k_i$, and $\bar{h}_i$ for $i\in\{s,l\}$ are the density, the heat capacity, the thermal conductivity, and the heat transfer coefficient, respectively and the subscripts $s$ and $l$ are associated to the solid or liquid phase, respectively. Referring to \cite{vergnes98} which introduces a model of spatially averaged temperature for screw extrusion, we incorporate the convective heat transfer through the barrel temperature in \eqref{sys1} \eqref{sys2}. The boundary conditions at $x=0$ and $x=L$ follow the heat conduction law, and the temperature at the interface $x=s(t)$ is maintained at the melting point $T_{{\rm m}}$, described as 
\begin{align}\label{sys3}
 \fr{\pa T_{{\rm s}}}{\pa x}(0,t) = - \fr{q_{{\rm f}}(t)}{k_{{\rm s}}} , \quad T_{{\rm s}}(s(t),t) = T_{{\rm m}}, \\
 \label{sys4} \fr{\pa T_{{\rm l}}}{\pa x}(L,t) = \fr{q_{{\rm m}}^*}{k_{{\rm l}}} , \quad T_{{\rm l}}(s(t),t) = T_{{\rm m}}, 
\end{align}
where $q_{{\rm f}}(t)<0$ is a freezing controller at the inlet and $q_{{\rm m}}^*>0$ is a heat flux at the nozzle which is assumed to be constant in time. The interface dynamics is derived by the energy balance at the interface as 
\begin{align}
\label{sys5}\rho_{s} \Delta H \dot{s}(t) = k_{{\rm s}} \fr{\pa T_{{\rm s}}}{\pa x}(s(t),t) - k_{{\rm l}} \fr{\pa T_{{\rm l}}}{\pa x}(s(t),t). 
\end{align}
The equations \eqref{sys1}-\eqref{sys5} are the solid-liquid phase change model known as "two-phase Stefan problem". Such a phase change model was developed for polymer processing 
%Here, we give the following remark to emphasize the conditions for the model \eqref{sys1}-\eqref{sys5} to be physically validated. 
\begin{remark} \emph{
In this paper, we assume the pressure in the chamber to be static and the melting temperature is constant to avoid supercooling. Then, to keep the physical state of each phase, the following conditions must hold:
\begin{align} \label{valid1}
T_{{\rm s}}(x,t) \leq& T_{{\rm m}}, \quad \forall x \in(0,s(t)), \quad \forall t>0, \\
\label{valid2}T_{{\rm l}}(x,t) \geq& T_{{\rm m}}, \quad \forall x \in(s(t),L), \quad \forall t>0, 
\end{align}
which represent the model validity conditions. }
\end{remark} 

\begin{remark} \emph{
We assume the existence of a heating/cooling system that maintains the  pellets  at a controlled temperature as stated in \eqref{sys3}, which describes the heat flux control at the inlet.  Extruders  can be  equipped with raw material preconditioners as  intermediate unit operators, which for instance help to pre-heat  ingredients before they enter the extruder chamber  by adding steam.  The prconditioners are usually located between the inlet and the extruder chamber and a continuous flow of material from the feeder to the preconditioner is maintained \cite{Riaz2000, Fang2003}.}
\end{remark}

\section{Steady-state and analysis} \label{ssa}
To ensure  a continuous  extrusion process, the control of the quantity of  molten polymer that  remains in the extruder chamber at any given time  is crucial. By definition, the volume of fully melted material contained in the chamber is directly related to the position of the solid-liquid interface that needs to be controlled, consequently. Physically, any given position of the interface along the spatial domain correspond to a melt temperature profile along the extruder.
  \subsection{Steady-state solution} 
An analytical solution  of the steady-state temperature profile denoted as $\left(T_{{\rm s},{\rm eq}}(x), T_{{\rm l},{\rm eq}}(x)\right)$  for any given setpoint value  of the interface position defined as $s_{{\rm r}}$, can be computed by setting the time derivative of the system \eqref{sys1}-\eqref{sys5} to zero. Hence, from  \eqref{sys1} and  \eqref{sys2} the following set of ordinary differential equations in space are obtained 
%
%The steady-state profile is given by setting the time derivative of the system \eqref{sys1}-\eqref{sys5} to be zero with setting the interface position to be the setpoint $s_{{\rm r}}$. Hence, the conditions are described as 
\begin{align}\label{eq1}
\begin{cases}
0= &\alp_{{\rm s}} T_{{\rm s},{\rm eq}}''(x) - b T_{{\rm s},{\rm eq}}'(x) 
+ h_{{\rm s}} \left( T_{{\rm b}} - T_{{\rm s},{\rm eq}}(x) \right) ,  \\
0 =& \alp_{{\rm l}} T_{{\rm l},{\rm eq}}''(x) - b T_{{\rm l},{\rm eq}}'(x) + h_{{\rm l}} \left(T_{{\rm b}} - T_{{\rm l},{\rm eq}}(x) \right), 
\end{cases}
\end{align}
and the boundary values are given as 
\begin{align}
\begin{cases}
 \label{eqbc1}T_{{\rm s},{\rm eq}}'(0) =& - \fr{q_{{\rm f}}^*}{k_{{\rm s}}} , \quad T_{{\rm s},{\rm eq}}(s_{r}) = T_{{\rm m}}, \\
 T_{{\rm l},{\rm eq}}'(L) =& \fr{q_{{\rm m}}^*}{k_{{\rm l}}} , \quad T_{{\rm l},{\rm eq}}(s_{{\rm r}}) = T_{{\rm m}}.\end{cases}
\end{align}
At equilibrium, the interface equation \eqref{sys5} satisfies the following equality
\begin{align}\label{fluxbalance}
0 =& k_{{\rm s}} T_{{\rm s},{\rm eq}}'(s_{{\rm r}}) - k_{{\rm l}} T_{{\rm l},{\rm eq}}'(s_{{\rm r}}). 
\end{align}
The solution to the set of  differential equations \eqref{eq1} has the following  form 
\begin{align}\begin{cases}
\label{ss1}
T_{{\rm l},{\rm eq}}(x) =& p_1 e^{q_1 (x-s_{{\rm r}})} + p_2 e^{q_2 (x-s_{{\rm r}})} + T_{{\rm b}}, \\
T_{{\rm s},{\rm eq}}(x) =& p_3  e^{q_3 (x-s_{{\rm r}})} + p_4  e^{q_4 (x-s_{{\rm r}})} + T_{{\rm b}} , 
\end{cases}
\end{align}
where 
\begin{align}
q_1 =& \fr{  b + \sqrt{ b^2 + 4 \alpha_{{\rm l}}  h_{{\rm l}} }}{2 \alpha_{{\rm l}}} , \quad q_2 = \fr{ b - \sqrt{ b^2 + 4 \alpha_{{\rm l}} h_{{\rm l}} }}{2 \alpha_{{\rm l}}}, \\
q_3 =& \fr{  b + \sqrt{ b^2 + 4 \alpha_{{\rm s}}  h_{{\rm s}} }}{2 \alpha_{{\rm s}}} , \quad q_4 = \fr{ b - \sqrt{ b^2 + 4 \alpha_{{\rm s}} h_{{\rm s}} }}{2 \alpha_{{\rm s}}}. 
\end{align}
Let $r = T_{{\rm b}} - T_{{\rm m}}$. Substituting \eqref{ss1} into the boundary conditions \eqref{eqbc1} and \eqref{fluxbalance},  we obtain
\begin{align}\label{p1}
p_1 =& \fr{r q_2 e^{q_2 (L- s_{{\rm r}})}+ q_{{\rm m}}^*/k_{{\rm l}} }{q_1 e^{q_1 (L- s_{{\rm r}})} - q_2 e^{q_2 (L- s_{{\rm r}})}  }, \\
\label{p2} p_2 =& - \fr{r q_1 e^{q_1 (L- s_{{\rm r}})}+ q_{{\rm m}}^*/k_{{\rm l}} }{q_1 e^{q_1 (L- s_{{\rm r}})} - q_2 e^{q_2 (L- s_{{\rm r}})}  }, \\
\label{p3} p_3 =& \fr{r q_4 + K/k_{{\rm s}}}{q_3 - q_4} , \\
\label{p4} p_4 =&  \fr{- r q_3 - K/{k_{{\rm s}}}}{q_3 - q_4}, \\
\label{K}K =& \fr{ k_{{\rm l}}r (-q_1 q_2) \left( e^{q_1 (L- s_{{\rm r}})} - e^{q_2 (L- s_{{\rm r}})} \right) + (q_1 - q_2) q_{{\rm m}}^* }{q_1 e^{q_1 (L- s_{{\rm r}})} - q_2 e^{q_2 (L- s_{{\rm r}})}  } , 
\end{align}
and the steady-state input is given by  
\begin{align}\label{ssinput}
q_{{\rm f}}^* =& p_3 q_3 e^{-q_3 s_{{\rm r}}} + p_4 q_4  e^{ -q_4 s_{{\rm r}}} . 
\end{align}
Hence, once the parameters $(s_{{\rm r}}, T_{{\rm b}}, q_{{\rm m}}^*)$ are prescribed, the steady-state input is uniquely obtained. 

\subsection{Barrel temperature condition for a valid steady-state}
For the model validity, the steady-state must satisfy \eqref{valid1} and \eqref{valid2} which restricts the barrel temperature to some physically admissible values.
\begin{lem}\label{lem-valid}
If the barrel temperature satisfies
\begin{align}
- \underline{q} \leq T_{{\rm b}} - T_{{\rm m}} \leq \bar{q}, 
\end{align}
where 
\begin{align}
\underline{q} =& \fr{(q_1 - q_2) q_{{\rm m}}^*}{ q_{den}} , \quad \bar{q} =- \fr{q_{{\rm m}}^*}{ k_{{\rm l}} q_2   e^{q_2 (L- s_{{\rm r}})}}, \\
q_{den} = & - k_{{\rm l}} q_1 q_2
 \left( e^{q_1 (L- s_{{\rm r}})} - e^{q_2 (L- s_{{\rm r}})} \right) \notag\\
&+  k_{{\rm s}} q_3 \left( q_1 e^{q_1 (L- s_{{\rm r}})} - q_2 e^{q_2 (L- s_{{\rm r}})} \right), 
\end{align}
then the steady-state solution satisfies  \eqref{valid1} and \eqref{valid2}.
\end{lem}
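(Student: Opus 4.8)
The plan is to translate the two validity conditions into sign statements about the shifted profiles $\varphi_{\rm s}(x) \defs T_{{\rm s},{\rm eq}}(x) - T_{{\rm m}}$ and $\varphi_{\rm l}(x) \defs T_{{\rm l},{\rm eq}}(x) - T_{{\rm m}}$, and then to exploit the explicit exponential form \eqref{ss1}. From \eqref{p1}--\eqref{p4} one has $p_1+p_2 = -r$ and $p_3+p_4 = -r$, so that $\varphi_{\rm l}(s_{{\rm r}}) = p_1 + p_2 + r = 0$ and likewise $\varphi_{\rm s}(s_{{\rm r}}) = 0$. Hence \eqref{valid1}--\eqref{valid2} are equivalent to $\varphi_{\rm s} \le 0$ on $(0,s_{{\rm r}})$ and $\varphi_{\rm l} \ge 0$ on $(s_{{\rm r}},L)$: each profile must remain on the correct side of its common interface value $0$.

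The structural observation that makes this tractable is that each derivative, e.g. $\varphi_{\rm l}'(x) = p_1 q_1 e^{q_1(x-s_{{\rm r}})} + p_2 q_2 e^{q_2(x-s_{{\rm r}})}$, is a sum of two exponentials and therefore vanishes at most once, so each profile has at most one interior critical point. I would thus reduce the required monotonicity to the signs of $p_1,\dots,p_4$: since $q_1,q_3>0>q_2,q_4$, if $p_1 \ge 0$ and $p_2 \le 0$ then $\varphi_{\rm l}' \ge 0$, so $\varphi_{\rm l}$ increases from $\varphi_{\rm l}(s_{{\rm r}})=0$ and stays nonnegative; symmetrically, if $p_3 \ge 0$ and $p_4 \le 0$ then $\varphi_{\rm s}$ increases up to $\varphi_{\rm s}(s_{{\rm r}})=0$ and stays nonpositive. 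The two endpoints of the admissible interval then drop out of \eqref{p1} and \eqref{p4} directly. With $D \defs q_1 e^{q_1(L-s_{{\rm r}})} - q_2 e^{q_2(L-s_{{\rm r}})}>0$, the numerator of $p_1$ is nonnegative exactly when $r \le \bar q$, which yields the upper bound; and using the flux-balance quantity $K = k_{{\rm s}}\varphi_{\rm s}'(s_{{\rm r}}) = k_{{\rm l}}\varphi_{\rm l}'(s_{{\rm r}})$ one checks, after simplification, that $r q_3 + K/k_{{\rm s}} = \big(r\,q_{den} + (q_1-q_2)q_{{\rm m}}^*\big)/(k_{{\rm s}}D)$, so that $p_4 \le 0$ is equivalent to $r \ge -\underline q$, yielding the lower bound.

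The remaining and most delicate step is to secure the \emph{companion} sign conditions, namely $p_2 \le 0$ for the liquid and $p_3 \ge 0$ for the solid, throughout the interval $-\underline q \le r \le \bar q$; these are where the two phases are coupled, since the solid coefficients $p_3,p_4$ inherit $K$, which is itself built from the liquid data. I would handle them by combining the interface coupling $k_{{\rm s}}\varphi_{\rm s}'(s_{{\rm r}}) = k_{{\rm l}}\varphi_{\rm l}'(s_{{\rm r}})$ with the prescribed nozzle flux $\varphi_{\rm l}'(L) = q_{{\rm m}}^*/k_{{\rm l}} > 0$ and the positivity of the thermophysical groupings $\alp_i,h_i,k_i$; an efficient shortcut is to note that whenever $K \ge 0$ the unique possible interior extremum is a \emph{minimum} of $\varphi_{\rm s}$ and of $\varphi_{\rm l}$, so the extreme value of each profile is attained at a boundary point and the argument collapses to the two endpoint inequalities rather than to global monotonicity. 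I expect this endpoint-and-coupling bookkeeping, tying the clean algebraic thresholds $\underline q$ and $\bar q$ to the geometric sign of the profiles, to be the main obstacle, rather than any deep analytic ingredient.
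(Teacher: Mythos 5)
Your endpoint algebra is exactly the paper's: $p_1\geq 0$ is equivalent to $r\leq \bar q$, and, via your identity for $rq_3+K/k_{\rm s}$, $p_4\leq 0$ is equivalent to $r\geq -\underline q$. The genuine gap is the role you assign to the companion signs $p_2\leq 0$ and $p_3\geq 0$: they are neither implied by the lemma's hypotheses nor needed, so the ``delicate step'' you defer cannot be completed. Concretely, from \eqref{p2}, $p_2\leq 0$ is equivalent to $r\geq -q_{{\rm m}}^*/\left(k_{{\rm l}}q_1e^{q_1(L-s_{{\rm r}})}\right)$, and a direct computation gives $q_{den}-(q_1-q_2)k_{{\rm l}}q_1e^{q_1(L-s_{{\rm r}})}=(k_{{\rm s}}q_3-k_{{\rm l}}q_1)\left(q_1e^{q_1(L-s_{{\rm r}})}-q_2e^{q_2(L-s_{{\rm r}})}\right)$, so $-\underline q\geq -q_{{\rm m}}^*/\left(k_{{\rm l}}q_1e^{q_1(L-s_{{\rm r}})}\right)$ holds if and only if $k_{{\rm s}}q_3\geq k_{{\rm l}}q_1$. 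Since $k_{{\rm l}}q_1=\rho_{{\rm l}}c_{{\rm l}}\left(b+\sqrt{b^2+4\alpha_{{\rm l}}h_{{\rm l}}}\right)/2$ and $k_{{\rm s}}q_3=\rho_{{\rm s}}c_{{\rm s}}\left(b+\sqrt{b^2+4\alpha_{{\rm s}}h_{{\rm s}}}\right)/2$, this fails in the advection-dominated regime whenever $\rho_{{\rm l}}c_{{\rm l}}>\rho_{{\rm s}}c_{{\rm s}}$ --- which is the case for the paper's HDPE data --- and then there exist $r\in[-\underline q,\bar q]$ with $p_2>0$. Hence insisting on all four sign conditions can only establish the lemma on a strictly smaller interval. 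Your fallback shortcut is also oriented the wrong way on the liquid side: an interior critical point of $\varphi_{\rm l}$ is a \emph{minimum} precisely when $p_2>0$, and since $\varphi_{\rm l}(s_{{\rm r}})=0$ and $\varphi_{\rm l}'$ has at most one zero, an interior minimum at $x^*$ forces $\varphi_{\rm l}'<0$ on $(s_{{\rm r}},x^*)$ and thus $\varphi_{\rm l}(x^*)<0$; that is exactly the failure of \eqref{valid2}, so ``extreme value attained at a boundary point'' cannot reduce the liquid phase to endpoint checks.

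What closes the gap --- and is the paper's actual argument --- is to use the interface flux-balance inequality in place of the companion signs. First, $r\geq -\underline q$ implies \eqref{cond2}, because $q_{den}>-k_{{\rm l}}q_1q_2\left(e^{q_1(L-s_{{\rm r}})}-e^{q_2(L-s_{{\rm r}})}\right)>0$; and \eqref{cond2} is equivalent to \eqref{cond1}, i.e. $p_1q_1+p_2q_2=\varphi_{\rm l}'(s_{{\rm r}})\geq 0$ (your $K\geq 0$). Then \eqref{cond1} yields a derivative bound that needs only one sign per phase: multiplying $p_2q_2\geq -p_1q_1$ by $e^{q_2(x-s_{{\rm r}})}>0$ gives $\varphi_{\rm l}'(x)\geq p_1q_1\left(e^{q_1(x-s_{{\rm r}})}-e^{q_2(x-s_{{\rm r}})}\right)\geq 0$ for $x>s_{{\rm r}}$ once $p_1\geq 0$, \emph{regardless} of the sign of $p_2$; and since \eqref{fluxbalance} transfers \eqref{cond1} to $p_3q_3+p_4q_4\geq 0$, the same manipulation gives $\varphi_{\rm s}'(x)\geq p_4q_4\left(e^{q_4(x-s_{{\rm r}})}-e^{q_3(x-s_{{\rm r}})}\right)\geq 0$ for $x<s_{{\rm r}}$ once $p_4\leq 0$, regardless of the sign of $p_3$. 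Equivalently, in your language: under \eqref{cond1} the unique zero of $\varphi_{\rm l}'$ lies at or to the left of $s_{{\rm r}}$ when $p_1\geq 0$, and the unique zero of $\varphi_{\rm s}'$ lies at or to the right of $s_{{\rm r}}$ when $p_4\leq 0$, so no critical point ever enters the relevant domain. Monotonicity of each profile then follows from precisely the two endpoint conditions you already derived, and with both interface values pinned at $T_{{\rm m}}$ this gives \eqref{valid1} and \eqref{valid2}.
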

\begin{proof}
Since $T_{{\rm l},{\rm eq}}(s_{{\rm r}})=T_{{\rm m}}$, it is necessary to have $T_{{\rm l},{\rm eq}}'(s_{{\rm r}})\geq 0$ which yields
\begin{align}\label{cond1}
p_1 q_1 + p_2 q_2 \geq 0. 
\end{align}
Substituting \eqref{p1} and \eqref{p2} into \eqref{cond1}, we get 
\begin{align}\label{cond2}
T_{{\rm b}} - T_{{\rm m}} \geq  \fr{ \left( q_1 - q_2 \right)q_{{\rm m}}^*}{ k_{{\rm l}}q_1 q_2 \left( e^{q_1 (L- s_{{\rm r}})} - e^{q_2 (L- s_{{\rm r}})} \right)} , 
\end{align}
knowing that $q_1 q_2<0$. With the help of \eqref{cond1} and from   \eqref{ss1} the derivative of $T_{{\rm l},{\rm eq}}(x)$  satisfies $T_{{\rm l},{\rm eq}}'(x) \geq  p_1 q_1 \left( e^{q_1 (x-s_{{\rm r}})} - e^{q_2 (x-s_{{\rm r}})} \right)$. Thus, the sufficient condition of $T_{{\rm l},{\rm eq}}'(x)\geq 0$ for $\forall x\in(s_{{\rm r}}, L)$ is $p_1 q_1\geq0$ which yields 
\begin{align}\label{upper}
 T_{{\rm b}} - T_{{\rm m}}  \leq- \fr{q_{{\rm m}}^*}{ k_{{\rm l}}q_2   e^{q_2 (L- s_{{\rm r}})}}. 
 \end{align}
Next, the solid steady-state satisfies $T_{{\rm s},{\rm eq}}(s_{{\rm r}})=T_{{\rm m}}$, so it is necessary to have $T_{{\rm s},{\rm eq}}'(s_{{\rm r}})\geq 0$ leading to $p_3 q_3 + p_4 q_4 \geq 0$ which trivially holds under condition of \eqref{cond1}. 
Hence, from \eqref{ss1},  the derivative of $T_{{\rm s},{\rm eq}}(x)$   satisfies $T_{{\rm s},{\rm eq}}'(x) \geq p_4 q_4 \left( - e^{q_3 (x-s_{{\rm r}})} + e^{q_4 (x-s_{{\rm r}})} \right) $. Then, the sufficient condition for $T_{{\rm s},{\rm eq}}'(x)\geq 0$ is $p_4 q_4\geq0$, which yields 
\begin{align}\label{lower}
  T_{{\rm b}} - T_{{\rm m}}  \geq - \fr{(q_1 - q_2) q_{{\rm m}}^*}{q_{den}} . 
\end{align}
One can notice that condition  \eqref{lower} is less conservative than condition \eqref{cond2}. Hence, combining \eqref{upper} and \eqref{lower}, we conclude Lemma \ref{lem-valid}. 
\end{proof}

\section{Estimator Design of the Temperature Profile}\label{sec:observer}
Our previous work in \cite{koga2018polymer} presented the full-state feedback control law by assuming that the spatially distributed temperature profile can be measured. Some imaging-based thermal sensors such as IR camera enables to capture the entire profile of temperature, however, these sensors include high noise and detect the temperature of the chamber which contains a nominal error from the temperature of the polymer inside. Instead, single point thermal sensors such as thermocouples enable to accurately measure the surface temperature at the inlet of the extruder. Moreover, the interface position between the polymer granules and the melt polymer can be detected by cameras via image signal processing. Thus, we build an observer to estimate the temperature profile with utilizing these two available measurements. 

Let $\hat T_{{\rm s}}(x,t)$ be the estimated temperature profile. The observer design for $\hat T_{{\rm s}}(x,t)$ is stated the following theorem.   

\begin{thm} \label{thm:observer} 
Consider the plant model \eqref{sys1}, \eqref{sys3} with the two available measurements of  
\begin{align} \label{measure} 
Y_1(t) = s(t), \quad Y_2(t) = T_{s}(0,t),   
\end{align} 
and the following PDE observer 
\begin{align}\label{obs1}
\fr{\pa \hat{T}_{{\rm s}}}{\pa t}(x,t) =& \alp_{{\rm s}} \fr{\pa^2 \hat{T}_{{\rm s}}}{\pa x^2}(x,t) - b \fr{\pa \hat{T}_{{\rm s}}}{\pa x}(x,t)  \notag\\
& + h_{{\rm s}} \left( T_{{\rm b}} -  \hat{T}_{{\rm s}}(x,t) \right), \quad 0<x<Y_1(t), \\
\label{obs2} \fr{\pa \hat{T}_{{\rm s}}}{\pa x}(0,t) =& - \fr{q_{{\rm f}}(t)}{k_{{\rm s}}} - \gamma \left(Y_2(t) - \hat{T}_{{\rm s}}(0,t) \right), \\
\label{obs3}  \hat{T}_{{\rm s}}(s(t),t) =& T_{{\rm m}}, 
\end{align}
where $\gamma =\frac{b}{2 \alp_{{\rm s}}}$. Assume that $s(t) \in (0, L)$ and $\dot s(t)\geq 0$ for all $t \geq 0$. Then, the observer error system is exponentially stable at the origin in the sense of the norm 
\begin{align} \label{Phitilde} 
\tilde{\Phi}(t):= || T_{{\rm s}}(x,t) - \hat{T}_{{\rm s}}(x,t) ||_{{\cal H}_1} . 
\end{align} 
More precisely, there exists a positive constant $\tilde M>0$ such that the following inequality holds: 
\begin{align} \label{normdecay} 
\tilde{\Phi}(t) \leq \tilde M \tilde \Phi(0) e^{-2 	\left(h_s + \frac{b^2}{4 \alpha_{{\rm s}}}  + \frac{\alpha_{s}}{4 L^2}\right) t} 
\end{align}

\end{thm}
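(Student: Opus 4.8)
The plan is to reduce the observer error system to a single reaction--diffusion equation on the moving domain $(0,s(t))$ and then run a Lyapunov argument. First I would introduce the estimation error $\wt{T}_{\rm s}(x,t)=T_{\rm s}(x,t)-\hat{T}_{\rm s}(x,t)$ and subtract the observer \eqref{obs1}--\eqref{obs3} from the plant \eqref{sys1}, \eqref{sys3}. The barrel forcing and the convection coefficient survive, giving the interior equation $\wt{T}_{{\rm s},t}=\alp_{{\rm s}}\wt{T}_{{\rm s},xx}-b\,\wt{T}_{{\rm s},x}-h_{{\rm s}}\wt{T}_{\rm s}$. Because $Y_2(t)=T_{\rm s}(0,t)$, the injection term in \eqref{obs2} collapses the left boundary datum into the Robin condition $\wt{T}_{{\rm s},x}(0,t)=\gamma\,\wt{T}_{\rm s}(0,t)$, while \eqref{obs3} together with $T_{\rm s}(s(t),t)=T_{\rm m}$ produces the homogeneous Dirichlet condition $\wt{T}_{\rm s}(s(t),t)=0$ at the interface.

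The second step is the gauge transformation $u(x,t)=e^{-\gamma x}\wt{T}_{\rm s}(x,t)$ with $\gamma=\tfrac{b}{2\alp_{\rm s}}$, which is precisely the choice that cancels the first-order convection term. Substituting converts the error PDE into the pure reaction--diffusion equation $u_t=\alp_{\rm s}u_{xx}-\big(h_{\rm s}+\tfrac{b^2}{4\alp_{\rm s}}\big)u$, and it simultaneously flattens the Robin datum into the homogeneous Neumann condition $u_x(0,t)=0$ while preserving $u(s(t),t)=0$. Since $e^{-\gamma x}$ is bounded above and below on $[0,L]$, the norms of $u$ and $\wt{T}_{\rm s}$ are equivalent, so it suffices to prove exponential decay for $u$; the equivalence constants furnish $\tilde M$ in \eqref{normdecay}.

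Third, I would take a Lyapunov functional built from $\tfrac12\int_0^{s(t)}u^2\,dx$ together with $\tfrac12\int_0^{s(t)}u_x^2\,dx$ so as to control the full $\mathcal H_1$ norm, and differentiate it using the Reynolds transport rule for the moving domain. After integration by parts, the Neumann condition at $x=0$ and the Dirichlet condition at $x=s(t)$ annihilate the algebraic boundary contributions; the reaction term supplies the decay $-\big(h_{\rm s}+\tfrac{b^2}{4\alp_{\rm s}}\big)$ directly, and a Poincaré inequality anchored at $u(s(t),t)=0$, with the crude estimate $s(t)\le L$, converts the diffusion term into the additional rate $-\tfrac{\alp_{\rm s}}{4L^2}$. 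This yields $\dot V\le-2\big(h_{\rm s}+\tfrac{b^2}{4\alp_{\rm s}}+\tfrac{\alp_{\rm s}}{4L^2}\big)V$ and hence the claimed bound \eqref{normdecay}.

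The main obstacle is the moving-boundary bookkeeping in the $u_x$ part of the functional. Differentiating $\tfrac12\int_0^{s(t)}u_x^2\,dx$ generates a boundary term proportional to $u_x(s(t),t)^2\dot s(t)$, and after substituting $u_t(s(t),t)=-\dot s(t)\,u_x(s(t),t)$ — obtained by differentiating the identity $u(s(t),t)=0$ in time — the residual interface term reduces to $-\tfrac12 u_x(s(t),t)^2\dot s(t)$. This is exactly where the standing hypothesis $\dot s(t)\ge0$ is indispensable: it forces this term to be nonpositive so that it may be discarded, whereas without it the sign is uncontrolled. Verifying that these boundary terms assemble with the favorable sign, rather than the Poincaré estimate itself, is the delicate part of the argument.
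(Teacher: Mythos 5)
Your proposal is correct and follows essentially the same route as the paper's proof: the same error system, the same gauge transformation $e^{-\gamma x}$ that cancels the convection term and flattens the Robin condition to Neumann (the paper's $\tilde z$ variable), the same $\mathcal{H}_1$ Lyapunov functional on the moving domain, and the same use of $\dot s(t)\geq 0$ to discard the interface term $-\tfrac{1}{2}\dot s(t) u_x(s(t),t)^2$. The only cosmetic imprecision is that for the $\int_0^{s(t)} u_x^2\,dx$ part the required Poincar\'e inequality is the one anchored at the Neumann end ($u_x(0,t)=0$, giving $\|u_x\|^2\leq 4L^2\|u_{xx}\|^2$) rather than at the Dirichlet point, which is exactly what the paper uses as well.
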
 
\begin{proof} 

Let $\tilde u$ be the estimation error state defined by 
\begin{align} \label{utildedef} 
\tilde u := T_{{\rm s}} - \hat{T}_{{\rm s}}.
\end{align}
Subtraction of the observer system \eqref{obs1}--\eqref{obs3} from the plant \eqref{sys1} and \eqref{sys3} yields the following estimation error system: 
\begin{align}\label{estimationerr1}
\tilde{u}_{t}(x,t) =& \alp_{{\rm s}} \tilde{u}_{xx}(x,t) - b \tilde{u}_{x}(x,t) \notag\\
&  - h_{{\rm s}}\tilde{u}(x,t),  \quad 0<x<s(t), \\
 \tilde{u}_{x}(0,t) =& \gamma\tilde{u}(0,t), \\
\label{estimationerr3}\tilde{u}(s(t),t) =& 0.  
\end{align}
Let us introduce the following change of variable
\begin{align} \label{ztildedef} 
\tilde z(x,t) = \tilde u(x,t) e^{-\gamma x}	. 
\end{align}
Then, $\tilde u$-system in \eqref{estimationerr1}--\eqref{estimationerr3} is converted into the following $\tilde z$-system: 
\begin{align} \label{ztilde1}
\tilde z_t =& \alpha \tilde z_{xx} - \lambda \tilde z, \\
\tilde z_x(0,t) =& 0, \\
\tilde z(s(t),t) =& 0. 	\label{ztilde3} 
\end{align}
where $\lambda =  h_s + \frac{b^2}{4 \alpha_{{\rm s}}}$. To study the stability of the estimation error state at the origin, we consider the Lyapunov functional  
\begin{align} \label{defV1tilde} 
\tilde V = \frac{1}{2} || \tilde{z} ||_{{\cal H}_1}^2= \frac{1}{2}  \int_0^{s(t)}\tilde{z}(x,t)^2 dx + \frac{1}{2}  \int_0^{s(t)}\tilde{z}_{x}(x,t)^2 dx . 
\end{align} 
Taking the time derivative of \eqref{defV1tilde} along the solution of \eqref{estimationerr1}--\eqref{estimationerr3} leads to 
\begin{align}
\dot{\tilde V} =& \frac{\dot{s}(t)}{2} \tilde{z}(s(t),t)^2 + \int_0^{s(t)}\tilde{z}(x,t) \tilde{z}_{t}(x,t) dx  \notag\\
&+ \frac{\dot{s}(t)}{2} \tilde{z}_{x}(s(t),t)^2+ \int_0^{s(t)} \tilde z_x(x,t) \tilde z_{xt}(x,t) dx \notag\\
=&  \int_0^{s(t)}\tilde{z}(x,t) \left(  \alp_{{\rm s}} \tilde{z}_{xx}(x,t)   - \lambda \tilde{z}(x,t) \right) dx + \frac{\dot{s}(t)}{2} \tilde{z}_{x}(s(t),t)^2 \notag\\
&  + \tilde z_x(x,t) \tilde z_{t}(s(t),t) - \tilde z_x(0,t) \tilde z_{t}(0,t)  \notag\\
&- \int_0^{s(t)} \tilde z_{xx}(x,t) \tilde z_{t}(x,t) dx . \label{Vtilde1ineq} 
\end{align} 
Note that taking the total time derivative of the boundary condition \eqref{ztilde3} yields $\tilde z_t(s(t),t) = - \dot{s}(t) \tilde z_{x}(s(t),t)$. Substituting this into \eqref{Vtildeineq} and taking the integration by parts, we get
\begin{align} 
\dot{\tilde V} =& - \alp_{{\rm s}} || \tilde{z}_{xx} ||_{L_2}^2 - (\alp_{{\rm s}} + \lambda)  ||\tilde{z}_x||_{L_2}^2 \notag\\
& - \lambda ||\tilde{z}||_{L_2}^2  - \frac{\dot{s}(t)}{2}\tilde{z}_{x}(s(t),t)^2. \label{Vtildeineq} 
\end{align} 
With the help of $s(t) \in (0,L)$, Poincare's inequality gives $|| \tilde z||_{L_2}^2 \leq 4 L^2 || \tilde z_{x}||_{L_2}^2$ and $|| \tilde z_{x}||_{L_2}^2 \leq 4 L^2 || \tilde z_{xx}||_{L_2}^2$. Applying these inequalities and $\dot s(t) \geq 0$ to \eqref{Vtildeineq} leads to the following differential inequality
\begin{align} \label{Vtildeineq2} 
\dot{\tilde{V}} \leq 	& - 2 \left(\lambda + \frac{\alpha_{s}}{4 L^2} \right) \tilde{V} . 
\end{align}
Applying the comparison principle to \eqref{Vtildeineq2} yields 
\begin{align} 
\tilde V (t) \leq \tilde V(0) e^{- 2 \left(\lambda + \frac{\alpha_{s}}{4 L^2} \right) t} 	. \label{Vtildefin} 
\end{align}
By the definition of $\tilde z$ given in \eqref{ztildedef}, for the norm of $\tilde u$-system, the following upper and lower bounds hold
\begin{align} 
|| \tilde z ||_{L_2}^2 \leq || \tilde u||_{{L_2}}^2 \leq e^{2 \gamma L}	|| \tilde z ||_{L_2}^2, \\
|| \tilde z_x ||_{L_2}^2 \leq 2 || \tilde u_{x} ||_{{L_2}}^2 + 2 \gamma ^2 || \tilde u||_{{L_2}}^2 \notag\\
|| \tilde u_x||_{{L_2}}^2 \leq 2 e^{2 \gamma L}	(|| \tilde z_x ||_{L_2}^2 +  \gamma^2 || \tilde z ||_{L_2}^2). 
\end{align}
Hence, by defining $\tilde \Phi(t) = || \tilde u ||_{{\cal H}_1}^2$, the following inequalities hold 
\begin{align} \label{norm_ineq} 
\tilde M_1 \tilde V \leq  \tilde \Phi \leq \tilde M_2 \tilde V	
\end{align}
where $\tilde M_1 = 1/\max \{3, 2 \gamma^2\}$, and $\tilde M_2 = e^{2 \gamma L} \max \{3, 2 \gamma^2\} $. Applying \eqref{Vtildefin} to \eqref{norm_ineq} with defining $\tilde M = \tilde M_2/ \tilde M_1$ leads to the conclusion in Theorem \ref{thm:observer}. 

\end{proof} 

In addition, the estimated temperature can maintain not greater value than the true temperature in the plant, as stated in the following lemma. 
\begin{lem} \label{lem:negative} 
If $\tilde{u}(x,0) \geq 0$, $\forall x \in (0, s_0)$, then 
\begin{align} 
\tilde{u}(x,t) \geq & 0, \quad \forall x\in (0,s(t)), \quad \forall t\geq 0	, \\
\tilde{u}_{x}(s(t),t) \leq & 0, \quad \forall t\geq 0
\end{align}

\end{lem}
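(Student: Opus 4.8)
The plan is to prove Lemma~\ref{lem:negative} by a maximum-principle argument applied to the estimation error system \eqref{estimationerr1}--\eqref{estimationerr3}. The governing PDE for $\tilde u$ is a linear convection-diffusion-reaction equation, $\tilde u_t = \alp_{{\rm s}} \tilde u_{xx} - b \tilde u_x - h_{{\rm s}} \tilde u$, on the time-varying domain $0<x<s(t)$, with the Robin boundary condition $\tilde u_x(0,t) = \gamma \tilde u(0,t)$ at the inlet and the homogeneous Dirichlet condition $\tilde u(s(t),t) = 0$ at the interface. Since the initial data satisfies $\tilde u(x,0)\geq 0$, the claim is a nonnegativity (comparison) statement, so the natural tool is the parabolic maximum principle, with care taken for the reaction term and the moving boundary.

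The first step is to remove the zeroth-order reaction term $-h_{{\rm s}}\tilde u$, which does not respect the maximum principle directly. I would introduce the rescaled variable $w(x,t) := e^{h_{{\rm s}} t}\,\tilde u(x,t)$, so that $w$ satisfies the pure convection-diffusion equation $w_t = \alp_{{\rm s}} w_{xx} - b w_x$ with the same boundary structure: $w_x(0,t) = \gamma w(0,t)$ and $w(s(t),t)=0$, and with $w(x,0)=\tilde u(x,0)\geq 0$. Because the exponential factor is strictly positive, $\tilde u\geq 0$ is equivalent to $w\geq 0$, and it suffices to establish nonnegativity for $w$. This reduces the problem to a standard parabolic operator $w_t - \alp_{{\rm s}} w_{xx} + b w_x = 0$ for which the weak minimum principle applies on the space-time domain.

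The second step is to run the maximum-principle argument itself. I would argue that $w$ cannot attain a negative minimum in the interior or on the spatial boundaries. On the parabolic boundary, at $x=s(t)$ we have $w=0$, so no negative value arises there; at $t=0$ the data is nonnegative by hypothesis. The delicate boundary is the Robin condition at $x=0$. Here I would use the Hopf-type boundary-point argument: if a negative minimum of $w$ were attained at $x=0$ at some time, then the outward normal derivative (i.e.\ $-w_x(0,t)$) would be strictly positive at a strict minimum, while the Robin relation $w_x(0,t)=\gamma w(0,t)$ with $\gamma=\tfrac{b}{2\alp_{{\rm s}}}>0$ and $w(0,t)<0$ forces $w_x(0,t)<0$, yielding a contradiction with the sign demanded by the boundary-point lemma. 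This closes the argument and gives $w\geq 0$, hence $\tilde u(x,t)\geq 0$ on $(0,s(t))$ for all $t\geq 0$.

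The third step, establishing $\tilde u_x(s(t),t)\leq 0$, then follows almost immediately. Since $\tilde u\geq 0$ on the domain and $\tilde u(s(t),t)=0$ at the right endpoint, the interface is a boundary minimum of $\tilde u$; the spatial derivative approaching $x=s(t)$ from the left of a function that is nonnegative inside and vanishing at the endpoint must be nonpositive, so $\tilde u_x(s(t),t)\leq 0$. The main obstacle I anticipate is handling the time-varying domain together with the Robin boundary in a fully rigorous maximum-principle statement: the moving interface $s(t)$ (with $\dot s(t)\geq 0$, as assumed in Theorem~\ref{thm:observer}) means the standard cylindrical space-time domain is replaced by a region with a slanted right boundary, so one must verify that the boundary-point lemma is still applicable there and that the monotone growth of the domain does not introduce new candidate minima. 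A careful treatment would either invoke a version of the parabolic maximum principle valid on noncylindrical domains, or approximate $s(t)$ by piecewise-constant domains and pass to the limit.
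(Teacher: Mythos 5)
Your overall strategy (reduce to a nonnegativity statement, handle the Robin boundary at $x=0$ with a boundary-point lemma, then read off the gradient sign at the interface) is sound, but the key step as you wrote it contains a sign error that destroys the contradiction you need. At a strict \emph{negative minimum} of $w$ at the boundary point $x=0$, Hopf's boundary-point lemma gives a strictly \emph{negative} outward normal derivative --- the solution increases as one moves into the domain --- i.e.\ $-w_x(0,t_0)<0$, hence $w_x(0,t_0)>0$. The statement you invoke, ``outward normal derivative strictly positive at a strict minimum,'' is the sign that belongs to a boundary \emph{maximum}. With your sign, the boundary-point lemma gives $w_x(0,t_0)<0$, and the Robin condition $w_x(0,t_0)=\gamma w(0,t_0)<0$ gives the very same inequality, so the two statements are consistent and no contradiction arises: the proof as written does not close. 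The repair is immediate --- with the correct sign, Hopf forces $w_x(0,t_0)>0$ while the Robin relation forces $w_x(0,t_0)<0$, which is the desired contradiction. Your step 3 (deducing $\tilde u_x(s(t),t)\le 0$ from $\tilde u\ge 0$ in the interior and $\tilde u(s(t),t)=0$ at the endpoint) is correct, and is in fact more elementary than the paper's appeal to Hopf's lemma at that point.

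Once the sign is fixed, your route is genuinely different from the paper's. The paper reuses the spatial exponential transformation $\tilde z(x,t)=\tilde u(x,t)e^{-\gamma x}$ of \eqref{ztildedef}, where $\gamma=b/(2\alpha_{\rm s})$ is chosen so that the transformed system \eqref{ztilde1}--\eqref{ztilde3} has no convection term \emph{and} the Robin condition \eqref{obs2} becomes the homogeneous Neumann condition $\tilde z_x(0,t)=0$; the maximum principle and Hopf's lemma are then applied to this cleaner problem (a negative minimum at $x=0$ would force $\tilde z_x(0,t)>0$, directly contradicting $\tilde z_x(0,t)=0$), and the conclusions transfer to $\tilde u$ because $e^{-\gamma x}>0$. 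Your temporal rescaling $w=e^{h_{\rm s}t}\tilde u$ removes only the reaction term --- which is actually unnecessary, since $-h_{\rm s}\tilde u$ with $h_{\rm s}>0$ already has the sign that preserves nonnegativity --- and leaves the Robin boundary in place, which is exactly where your sign slip occurred. One point in your favor: you explicitly flag that the moving-boundary (noncylindrical) domain requires a maximum principle valid on such regions, a technicality the paper's one-line proof passes over in silence.
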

\begin{proof}
Applying Maximum principle to $\tilde z$-system governed by \eqref{ztilde1}--\eqref{ztilde3} leads to the statement that if $\tilde z(0,t) \geq 0$, $ \forall x \in (0, s_0)$ then $\tilde z(x,t) \geq 0$,  $\forall x\in (0,s(t)), \forall t\geq 0$. By the relation between $\tilde z$ and $\tilde u$ given in \eqref{ztildedef}, we prove Lemma \ref{lem:negative}, with the help of Hopf's lemma. 
\end{proof} 

The properties in Lemma \ref{lem:negative} are required to guarantee the positivity of the boundary heat input under the output feedback control design which is given in the later sections. 

\begin{remark} \emph{
	The convergence speed of the designed observer is characterized by $ h_s + \frac{b^2}{4 \alpha_{{\rm s}}} + \frac{\alpha_{s}}{4 L^2}$ as seen in the estimate of the norm \eqref{normdecay}, which cannot be choosen arbitrary fast for given physical constants and the manufacturing speed. The performance improvement to fasten the observer's convergence can be achieved by adding the measurement error injection to the observer PDE formulated by 
	\begin{align} 
		\fr{\pa \hat{T}_{{\rm s}}}{\pa t}(x,t) =& \alp_{{\rm s}} \fr{\pa^2 \hat{T}_{{\rm s}}}{\pa x^2}(x,t) - b \fr{\pa \hat{T}_{{\rm s}}}{\pa x}(x,t)+ h_{{\rm s}} \left( T_{{\rm b}} -  \hat{T}_{{\rm s}}(x,t) \right) \notag\\
		&  + p(x,t) (Y_2(t) - \hat{T}_{{\rm s}}(0,t)), \quad 0<x<Y_1(t), \label{PDE_obsv_fast} 
	\end{align}
where the distributed observer gain $p(x,t)$ can be designed using backstepping method as developed in \cite{Shumon17journal,koga2017battery,koga_2019seaice}. However, with the PDE observer \eqref{PDE_obsv_fast}, it is challenging to ensure the positivity of the output feedback control law. Since this paper's primary focus is on control design, we use the PDE observer given in \eqref{obs1}--\eqref{obs3}. }
	 
\end{remark}

\section{Control Design of Boundary Heat} \label{controldesign}
When the solid pellets are injected   and heated into the extruder chamber,  the amount of the molten polymer expands reducing the quantity of solid material into the chamber. Thus a cooling effect arising from the continuous feeding of cooler  pellets enables to maintain the interface at the desired setpoint. The setpoint open-loop boundary heat control  $q_{{\rm f}}(t) = q_{{\rm f}}^*$ (see \eqref{eqbc1}) is  not sufficient to drive the solid-liquid interface position to the desired setpoint. In this section, we develop the control design of the boundary heat at the inlet to drive the interface to the setpoint while stabilizing the temperature profile at the steady-state. 

\subsection{Reference error system for a dynamics reduced to a single phase }\label{sec:error}
First, we impose the following assumption on the liquid temperature. 
\begin{assum}\label{ass:ss}
The liquid temperature is at steady-state profile, i.e. $ T_{{\rm l}}(x,t)= T_{{\rm l},{\rm eq}}(x)$. 
\end{assum}

Under Assumption \ref{ass:ss}, the two-phase dynamics governed by \eqref{sys1}--\eqref{sys5} is reduced to a single-phase model. Let $(u(x,t), \hat u(x,t), X(t))$ be the reference error variables defined by
\begin{align}\label{udef} 
 u(x,t) =& - k_{{\rm s}} (T_{{\rm s}}(x,t) -  T_{{\rm s},{\rm eq}}(x)), \\
\hat u(x,t) =& - k_{{\rm s}} (\hat T_{{\rm s}}(x,t) - T_{{\rm s},{\rm eq}}(x)), \label{uhatdef} \\
 X(t) =& s(t) - s_{{\rm r}}.   
\end{align}
Note that the negative signs are included in \eqref{udef} and \eqref{uhatdef} to make the states $(u,\hat u)$ have positivity properties for the model validity conditions to hold, which is consistent with the analysis in \cite{Shumon17journal}. Then, the estimation error state $\tilde u$ defined by \eqref{utildedef} yields 
\begin{align}
\tilde u(x,t) = \hat u(x,t) - u(x,t).
\end{align}   
We rewrite the original system \eqref{sys1}--\eqref{sys5} using the reference and estimation error states $(\hat u,X,\tilde u)$. Substituting $x = s(t)$ into \eqref{uhatdef} with the help of \eqref{obs3}, we get 
\begin{align}\label{linear1}
\hat u(s(t),t) =& k_{{\rm s}} ( T_{{\rm s},{\rm eq}}(s(t)) - T_{{\rm m}}). 
\end{align}
In addition, rewriting \eqref{sys5} in term of  $\hat u(x,t)$ with $\tilde u(x,t)$ leads to the following equation of interface dynamics
\begin{align}\label{linear2}
\dot{X}(t) =& - \bar{\beta} \left( \hat u_{x}(s(t),t) - \tilde u_{x}(s(t),t) \right) \notag\\
&+ \bar{\beta}  \left( k_{{\rm s}}T_{{\rm s},{\rm eq}}'(s(t)) -  k_{{\rm l}} T_{{\rm l},{\rm eq}}'(s(t)) \right), 
\end{align}
where $\bar{\beta} = \left(\rho_{s} \Delta H\right)^{-1}$. Taking a linearization of the right hand side of \eqref{linear1} and \eqref{linear2} with respect to $s(t)$ around the setpoint $s_{{\rm r}}$ and by the steady state solutions in  \eqref{ss1}, the dynamics of the reference error system is obtained by
\begin{align}\label{err1}
 \hat u_{t}(x,t) =& \alp_{{\rm s}} \hat u_{xx}(x,t) - b \hat u_{x}(x,t) - h_{{\rm s}} \hat u(x,t), \\
\label{err2} \hat u_{x}(0,t) =& -U(t) + \gamma \tilde{u}(0,t), \\
 \hat u(s(t),t) =& C X(t), \\
 \label{err3} \dot{X}(t) =& A X(t) - \bar{\beta} \hat u_{x}(s(t),t) + \bar{\beta} \tilde u_{x}(s(t),t)  , 
  \end{align}
  where
  \begin{align}
  U(t) =& - (q_{{\rm f}}(t) - q_{{\rm f}}^*), \\
  \label{Cdef}
  C =& k_{{\rm s}} \left( p_3 q_3 + p_4 q_4 \right), \\
 \label{Adef} A =& \bar{\beta} \left( k_{{\rm s}} ( p_3 q_3^2 + p_4 q_4^2 ) - k_{{\rm l}} ( p_1 q_1^2 + p_2 q_2^2 ) \right). 
  \end{align}

\subsection{Backstepping transformation} 
A well-known design method of the output feedback control for PDEs is achieved by introducing the backstepping transformation which maps the observer PDE with using the gain kernel function derived for the full-state feedback control. Therefore, we consider the following transformation 
\begin{align}\label{bkst}
\hat w(x,t) = & \hat u(x,t) - \frac{\bar{\beta}}{\alpha_{{\rm s}}}  \int_{x}^{s(t)} \phi(x-y) \hat u(y,t) dy \notag\\
&-  \phi(x-s(t)) X(t), 
\end{align}
where $\phi$ is the gain kernel function derived in \cite{koga2018polymer}, which satisfies the following differential equation with the initial condition 
\begin{align}\label{gainODE}
\alpha_{{\rm s}}  \phi''(x) - &(b + \bar{\beta} C)  \phi'(x) - \left(A-\fr{\bar{\beta} b}{\alpha_{{\rm s}}}C + h_{{\rm s}}\right) \phi(x) =0, \\
\label{gainIC}\phi(0) =& 0, \quad \phi'(0) = \fr{c}{\bar{\beta}}, 
\end{align}
where $c>0$ is a control gain. The solution to \eqref{gainODE} with \eqref{gainIC} is uniquely given by  
\begin{align} \label{phisol} 
\phi(x) =  \fr{c}{\bar{\beta} (d_1 - d_2)} \left( e^{d_1 x} - e^{d_2 x} \right) , 
\end{align}
where 
$d_1$, $d_2$ are defined by 
\begin{align}
d_1 = &\fr{ \bar b + \sqrt{ D}}{2 \alpha_{{\rm s}}}, \quad d_2 = \fr{ \bar b - \sqrt{ D}}{2 \alpha_{{\rm s}}}, \\
\bar b =& b + \bar{\beta} C, \\
D =& \bar b^2 + 4 \alpha_{{\rm s}} \left(A-\fr{\bar{\beta} b}{\alpha_{{\rm s}}}C + h_{{\rm s}}\right) . 
\end{align} 
The full-state feedback control law developed in \cite{koga2018polymer} is given by 
\begin{align}\label{Ufull}
U_{{\rm full}}(t) =& - \gm u(0,t) - \frac{\bar{\beta}}{\alpha_{{\rm s}}} \int_{0}^{s(t)} f(x) u(x,t) dx \notag\\
& -  f(s(t))X(t),
\end{align}
where
\begin{align}
\gm = &\fr{b}{2\alpha_{{\rm s}}}, \\
f(x) = & \phi'(-x) - \gm \phi(-x), \\
=& \fr{c}{\bar{\beta} (d_1 - d_2)}  \left( (d_1 - \gm ) e^{- d_1 x} - (d_2 - \gm ) e^{ - d_2 x} \right).  
\end{align}
The associated output feedback control law is normally designed by replacing the plant state in the full-state feedback control law with the observer state. Since $X(t)$ in \eqref{Ufull} can be directly measured and its observer state is not constructed, we keep the term $X(t)$. Moreover, for the sake of proving the positivity of the designed control law later, we also hold the boundary value term $u(0,t)$ in \eqref{Ufull}, which can also be directly measured. Hence, the resulting observer-based output feedback control law is designed by 
\begin{align}\label{Ucont}
U(t) =& - \gm u(0,t) - \frac{\bar{\beta}}{\alpha_{{\rm s}}} \int_{0}^{s(t)} f(x) \hat u(x,t) dx \notag\\
& -  f(s(t))X(t),
\end{align}
Then, taking the derivatives of \eqref{bkst} in $x$ and $t$ along the solution of \eqref{err1}-\eqref{err3} with the gain kernel function \eqref{phisol}, the transformed $(\hat w, X)$-system (so-called "target system") is described by the following dynamics
\begin{align}\label{tarPDE}
 \hat w_{t}(x,t) =& \alp_{{\rm s}} \hat w_{xx}(x,t) - b \hat w_{x}(x,t) - h_{{\rm s}} \hat w(x,t) \notag\\
&+  \dot{s}(t) g(x-s(t)) X(t)\notag\\
& - \bar{\beta} \phi(x-s(t)) \tilde{u}_{x}(s(t),t), \quad 0<x<s(t) \\
\label{tarBC1} \hat w_{x}(0,t) =& \gm \hat w(0,t), \\
\label{tarBC2} \hat w(s(t),t) =& C X(t), \\
 \label{tarODE} \dot{X}(t) =& \left(A - c \right) X(t) - \bar{\beta} \hat w_{x}(s(t),t) + \bar{\beta} \tilde u_{x}(s(t),t), 
  \end{align}
where 
\begin{align} 
g(x) = &\phi'(x) - \frac{\bar{\beta}}{\alpha_{{\rm s}}} C \phi(x). 
\end{align} 
%Taking the spatial derivative of \eqref{bkst} twice, we obtain 
%\begin{align} 
%	w_{x}(x,t) = &u_{x}(x,t) + m(x,x) u(x,t) - \int_{x}^{s(t)} m_{x}(x,y) u(y,t) dy \notag\\
%&-  \phi'(x-s(t)) X(t), \\
%	w_{xx}(x,t) = &u_{xx}(x,t) + m(x,x) u_{x}(x,t) + \left(m_{x}(x,x) + \frac{d}{{\rm d}x}m(x,x) \right) u(x,t)  \notag\\
%&- \int_{x}^{s(t)} m_{xx}(x,y) u(y,t) dy-  \phi''(x-s(t)) X(t), 
%\end{align}
%Taking the time derivative of \eqref{bkst} along the solution of \eqref{err1}-\eqref{err3} leads to 
%\begin{align} 
%	w_t(x,t) = &u_t(x,t) - \dot{s}(t) m(x,s(t)) u(s(t),t) -  \int_{x}^{s(t)} m(x,y) u_t(y,t) dy \notag\\
%&+\dot{s}(t)  \phi'(x-s(t)) X(t)-  \phi(x-s(t)) \dot X(t), \notag\\ 
%= & \alp_{{\rm s}} u_{xx}(x,t) - b u_{x}(x,t) - h_{{\rm s}} u(x,t)\notag\\
%& - \int_{x}^{s(t)} m(x,y) (\alp_{{\rm s}} u_{yy}(y,t) - b u_{y}(y,t) - h_{{\rm s}} u(y,t)) dy\notag\\
%&+\dot{s}(t)  \left( \phi'(x-s(t)) - C m(x,s(t)))X(t) \notag\\
%&-  \phi(x-s(t)) (A X(t) - \bar{\beta} \hat u_{x}(s(t),t) - \bar{\beta} \tilde u_{x}(s(t),t)), \notag\\
%= & \alp_{{\rm s}} u_{xx}(x,t) - b u_{x}(x,t) - h_{{\rm s}} u(x,t)\notag\\
%& - \alp_{{\rm s}} \left(m(x,s(t)) u_{x}(s(t),t) - m(x,x) u_{x}(x,t) - m_{y}(x,s(t)) u(s(t),t) + m_{y}(x,x) u(x,t) + \int_{x}^{s(t)} m(x,y) u_{yy}(y,t) dy \right) - \int_{x}^{s(t)} m(x,y) ( - b u_{y}(y,t) - h_{{\rm s}} u(y,t)) dy\notag\\
%&+\dot{s}(t)  \left( \phi'(x-s(t)) - C m(x,s(t)))X(t) \notag\\
%&-  \phi(x-s(t)) (A X(t) - \bar{\beta} \hat u_{x}(s(t),t) - \bar{\beta} \tilde u_{x}(s(t),t)), 
%\end{align}
%
%
Rewriting the control law \eqref{Ucont} with respect to the boundary heat control $q_{{\rm f}}(t)$, the estimated temperature $\hat T_{{\rm s}}$, the reference steady-state $T_{{\rm s},{\rm eq}}$, and the measured variables $Y_1(t)$ and $Y_2(t)$, the resulting output feedback control is described by 
\begin{align}\label{qfcont}
q_{{\rm f}}(t) =& q_{{\rm f}}^* - \gm k_{{\rm s}} ( Y_2(t) - T_{{\rm s},{\rm eq}}(0)) \notag\\
&- \frac{\bar{\beta}k_{{\rm s}}}{\alpha_{{\rm s}}} \int_{0}^{Y_1(t)} f(x) (\hat T_{{\rm s}}(x,t) - T_{{\rm s},{\rm eq}}(x)) {\rm d}x \notag\\
& +  f(Y_1(t))(Y_1(t) - s_{r} ). 
\end{align}

\section{Theoretical Analysis for a Specific Setup}\label{sec:theo}
While the controller is designed through the backstepping method, the stability of the target system is not proven theoretically. Moreover, the condition of model validity needs to be satisfied under the control law. To achieve a theoretical result,  in this section we impose following assumptions.
\begin{assum} \label{initial-estimate}
The initial condition of the estimated temperature profile is not greater than that of the true temperature profile, i.e., 
\begin{align} 
\hat T_{{\rm s}}(x,0) \leq T_{{\rm s}}(x,0), \quad \forall x \in (0, s_0), 
\end{align}
 	where $s_0 := s(0)$.
\end{assum}

\begin{assum}\label{setup}
The barrel temperature is set as melting temperature and the external heat input is zero, i.e.
\begin{align}\label{ass-1}
T_{{\rm b}} = T_{{\rm m}}, \quad q_{{\rm m}}^* = 0. 
\end{align}
\end{assum}

\begin{coro} \label{coro:2} 
Under Assumption \ref{initial-estimate}, it holds $\tilde u(x,t) \geq 0$ and $\tilde u_x(s(t),t) \leq 0$, $\forall x \in (0,s(t)), \forall t \geq 0$, as proven in Lemma \ref{lem:negative}. 	
\end{coro}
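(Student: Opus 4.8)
The plan is to observe that this corollary is an immediate specialization of Lemma \ref{lem:negative}: essentially all that is required is to verify that Assumption \ref{initial-estimate} supplies exactly the initial-data hypothesis invoked there.

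First I would restate Assumption \ref{initial-estimate} in terms of the estimation error state. By the definition \eqref{utildedef}, $\tilde u(x,0) = T_{{\rm s}}(x,0) - \hat T_{{\rm s}}(x,0)$, so the assumed inequality $\hat T_{{\rm s}}(x,0) \leq T_{{\rm s}}(x,0)$ on $(0,s_0)$ translates directly into $\tilde u(x,0) \geq 0$ for all $x \in (0,s_0)$. This is precisely the initial condition appearing in the hypothesis of Lemma \ref{lem:negative}.

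Next I would simply invoke Lemma \ref{lem:negative}. Since its hypothesis is now met, its two conclusions hold verbatim: $\tilde u(x,t) \geq 0$ on $(0,s(t))$ for all $t \geq 0$, and $\tilde u_x(s(t),t) \leq 0$ for all $t \geq 0$. These are exactly the two assertions of the corollary, so the proof is complete.

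There is essentially no obstacle here beyond bookkeeping; the substantive content, namely the maximum-principle argument on the $\tilde z$-system \eqref{ztilde1}--\eqref{ztilde3} together with Hopf's lemma that propagates initial nonnegativity forward in time and yields the sign of the boundary flux, has already been carried out in the proof of Lemma \ref{lem:negative}. The only point worth double-checking is the sign convention: I would confirm that the direction of the inequality in Assumption \ref{initial-estimate} corresponds to $\tilde u(x,0) \geq 0$ under the definition \eqref{utildedef}, and not to its negation. Once that orientation is confirmed, the corollary follows immediately.
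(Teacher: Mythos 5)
Your proposal is correct and takes essentially the same approach as the paper: the paper treats Corollary \ref{coro:2} as an immediate consequence of Lemma \ref{lem:negative}, and your only added step---checking that Assumption \ref{initial-estimate} together with definition \eqref{utildedef} gives $\tilde u(x,0) = T_{{\rm s}}(x,0) - \hat T_{{\rm s}}(x,0) \geq 0$, exactly the hypothesis of that lemma---is precisely the intended bookkeeping. Your sign-convention check also resolves the right way; even under the paper's later identification of $\tilde u$ with $\hat u - u$ (which actually carries an extra positive factor $k_{{\rm s}}$), the sign, and hence the conclusion, is unchanged.
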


\begin{coro}
Under Assumption \ref{setup}, the steady state profiles \eqref{ss1}, and steady state input \eqref{ssinput} becomes $T_{{\rm l},{\rm eq}}(x) = T_{{\rm m}}$, $T_{{\rm s},{\rm eq}}(x) = T_{{\rm m}}$, and $q_{{\rm f}}^* = 0$. Also, $C = 0$ and $A = 0$.  
\end{coro}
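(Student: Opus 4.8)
The plan is to exploit the fact that Assumption~\ref{setup} forces the single scalar $r = T_{{\rm b}} - T_{{\rm m}}$ to vanish, since $T_{{\rm b}} = T_{{\rm m}}$, while simultaneously setting $q_{{\rm m}}^* = 0$. Every one of the coefficients $p_1,\dots,p_4$ and the auxiliary constant $K$ appearing in \eqref{p1}--\eqref{K} is a homogeneous linear expression in the two quantities $r$ and $q_{{\rm m}}^*$, so driving both to zero should collapse the entire steady-state profile. The corollary is therefore a direct specialization of the steady-state formulas derived in Section~\ref{ssa}, and I would prove it by straightforward substitution.

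First I would substitute $r = 0$ and $q_{{\rm m}}^* = 0$ into \eqref{K}. Both terms in the numerator carry a factor of either $r$ or $q_{{\rm m}}^*$, so the numerator vanishes while the denominator $q_1 e^{q_1 (L- s_{{\rm r}})} - q_2 e^{q_2 (L- s_{{\rm r}})}$ stays nonzero (it is in fact strictly positive, since $q_1 > 0 > q_2$); hence $K = 0$. Next I would feed $r = 0$, $q_{{\rm m}}^* = 0$, and $K = 0$ into \eqref{p1}--\eqref{p4}. The numerators defining $p_1$ and $p_2$ reduce to $r q_2 e^{q_2(L-s_{{\rm r}})} + q_{{\rm m}}^*/k_{{\rm l}}$ and its analogue, both of which vanish; the numerators $r q_4 + K/k_{{\rm s}}$ and $-r q_3 - K/k_{{\rm s}}$ defining $p_3$ and $p_4$ vanish as well, with $q_3 - q_4 \neq 0$ keeping those quotients well defined. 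Thus all four coefficients are zero.

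With $p_1 = p_2 = p_3 = p_4 = 0$ in hand, the remaining conclusions follow by inspection. The exponential terms in \eqref{ss1} drop out, leaving $T_{{\rm l},{\rm eq}}(x) = T_{{\rm b}} = T_{{\rm m}}$ and $T_{{\rm s},{\rm eq}}(x) = T_{{\rm b}} = T_{{\rm m}}$; the steady-state input \eqref{ssinput} becomes $q_{{\rm f}}^* = 0$; and the constants $C = k_{{\rm s}}(p_3 q_3 + p_4 q_4)$ and $A = \bar{\beta}(k_{{\rm s}}(p_3 q_3^2 + p_4 q_4^2) - k_{{\rm l}}(p_1 q_1^2 + p_2 q_2^2))$ from \eqref{Cdef}--\eqref{Adef} each vanish term by term.

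There is no genuine obstacle to overcome; this statement is purely a bookkeeping specialization. The only point that deserves a line of justification is confirming that the common denominator in $K$, $p_1$, and $p_2$ remains bounded away from zero, so that the vanishing of the numerators yields genuine zeros rather than indeterminate forms — this is immediate from the signs of the characteristic roots $q_1$ and $q_2$.
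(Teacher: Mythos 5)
Your proof is correct and matches what the paper intends: the corollary is stated without an explicit proof precisely because it follows by the direct substitution $r = T_{{\rm b}} - T_{{\rm m}} = 0$, $q_{{\rm m}}^* = 0$ into \eqref{p1}--\eqref{K}, \eqref{ss1}, \eqref{ssinput}, \eqref{Cdef}, and \eqref{Adef} that you carry out. Your extra remark that the common denominator $q_1 e^{q_1 (L- s_{{\rm r}})} - q_2 e^{q_2 (L- s_{{\rm r}})}$ is strictly positive (since $q_1 > 0 > q_2$) is a sound and worthwhile addition.
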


In addition, the following setpoint restriction is given. 
\begin{assum} \label{ass:setpoint} 
The setpoint is chosen to satisfy 
\begin{align}
s_{r} > s_0 + \frac{\bar{\beta}k_{{\rm s}}}{\alpha_{{\rm s}}} \int_{0}^{s_0} \fr{f(x)}{f(s_0)} (T_{{\rm m}} - \hat T_{{\rm s}}(x,0)) {\rm d}x.  
\end{align} 
\end{assum}

The main theorem is stated as following. 
\begin{thm}\label{theo}
Let Assumptions \ref{ass:ss}--\ref{ass:setpoint} hold. Then, the closed-loop system consisting of the plant \eqref{sys1}--\eqref{sys5}, the measurements \eqref{measure}, the observer \eqref{obs1}--\eqref{obs3}, and the control law \eqref{qfcont} satisfies the conditions for model validity \eqref{valid1}, \eqref{valid2}, and is exponentially stable at the origin in the norm
\begin{align} 
\hat \Phi(t) := &|| T_{{\rm s}}(x,t) - T_{{\rm s,eq}}(x) ||_{{\cal H}_1} \notag\\
&+ || T_{{\rm s}}(x,t) - \hat{T}_{{\rm s}}(x,t) ||_{{\cal H}_1}  + |s(t) - s_{{\rm r}}|  , 
\end{align}
namely, there exists a positive constant $\hat {M}>0$ such that the following estimate of the norm holds 
\begin{align}  
\hat \Phi(t) \leq \hat{M} \hat \Phi(0) e^{- d t} , 
\end{align} 
where $d = \min\left\{ \frac{\alp_{{\rm s}}}{16 s_{{\rm r}}}  + \frac{b^2}{4 \alp_{{\rm s}}} + h_{{\rm s}},	  c\right\}$. 
\end{thm}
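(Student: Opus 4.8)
The plan is to exploit the cascade structure created by the backstepping design: the observer error system is autonomous, so it can be made to drive the reference-error/interface subsystem, and I would treat model validity first and exponential decay second. I would begin by specializing to Assumption \ref{setup}: by the corollary following it the steady states collapse to $T_{{\rm s},{\rm eq}}\equiv T_{{\rm l},{\rm eq}}\equiv T_{{\rm m}}$ with $C=0$ and $A=0$, so the target system \eqref{tarPDE}--\eqref{tarODE} reduces to a reaction--advection--diffusion equation for $\hat w$ with homogeneous Dirichlet data $\hat w(s(t),t)=0$, Robin data $\hat w_x(0,t)=\gm\hat w(0,t)$, and the ODE $\dot X=-cX-\bar{\beta}\hat w_x(s,t)+\bar{\beta}\tilde u_x(s,t)$. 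Because $T_{{\rm l}}\equiv T_{{\rm m}}$, condition \eqref{valid2} holds with equality, so only \eqref{valid1}, equivalently $u:=-k_{{\rm s}}(T_{{\rm s}}-T_{{\rm m}})\ge0$, must be secured.

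Next I would establish the geometric facts in the order (i) $\tilde u\ge0$ and $\tilde u_x(s,t)\le0$, (ii) $u\ge0$ on $(0,s(t))$, (iii) $\dot s(t)\ge0$, and (iv) $s(t)\le s_{{\rm r}}<L$. Item (i) is immediate from Assumption \ref{initial-estimate} through Corollary \ref{coro:2} and Lemma \ref{lem:negative}. The interface law under Assumption \ref{setup} reads $\dot s=-\bar{\beta}u_x(s,t)$ (set $A=0$ in \eqref{err3} and use $u=\hat u-\tilde u$), so once $u\ge0$ is known, the Dirichlet condition $u(s,t)=0$ forces $u_x(s,t)\le0$ by Hopf's lemma, giving (iii); crucially, (iii) is exactly the hypothesis $\dot s\ge0$ required to invoke Theorem \ref{thm:observer}, so the logic is not circular provided (ii) is obtained independently by a maximum principle. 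To get (ii) I would apply the maximum principle to the $u$-system, whose inlet flux is $u_x(0,t)=q_{{\rm f}}(t)$; this requires $q_{{\rm f}}$ to carry the correct cooling sign, which is where the control law \eqref{qfcont} and the setpoint restriction in Assumption \ref{ass:setpoint} enter, the latter being tailored so that $X(t)=s(t)-s_{{\rm r}}\le0$ is propagated in time and the boundary contribution $f(s)X$ keeps the flux admissible while ruling out overshoot, i.e.\ (iv). I expect \emph{this} simultaneous verification of the flux sign and the no-overshoot bound to be the main obstacle, since $u$ and $\hat u$ are coupled through the control and one cannot cleanly decouple the maximum-principle argument from the interface trajectory.

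With model validity and $\dot s\ge0$, $s(t)\in(0,s_{{\rm r}}]$ in hand, I would run the Lyapunov argument on the $(\hat w,X)$ subsystem. I would first remove the advection term by the gauge change $\hat v=\hat w\,e^{-\gm x}$, mirroring \eqref{ztildedef}; this sends the Robin condition to the Neumann condition $\hat v_x(0,t)=0$ and turns \eqref{tarPDE} into a pure reaction--diffusion equation with rate $\lambda=h_{{\rm s}}+b^2/(4\alpha_{{\rm s}})$, carrying two indefinite couplings, the moving-boundary term $\dot s\,g(x-s)X$ and the observer-error injection through $\tilde u_x(s,t)$. I would take $V=\tfrac12||\hat v||_{{\cal H}_1}^2+\tfrac{p}{2}X^2$ for a suitable weight $p>0$; differentiating, integrating by parts, and using $\hat v(s,t)=0$ produces the dissipation $-2\lambda$ on the PDE part and $-c$ on the ODE part, plus a Poincaré gain controlled by $s(t)\le s_{{\rm r}}$ that supplies the $\alpha_{{\rm s}}/(16 s_{{\rm r}})$ contribution in $d$, together with boundary terms $\hat v_x(s,t)^2$ to be balanced against the ODE coupling $-\bar{\beta}\hat w_x(s,t)$ by choosing $p$ and applying Young's inequality.

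Finally, using $\dot s\ge0$ and the sign $\tilde u_x(s,t)\le0$ from Corollary \ref{coro:2}, I would absorb the moving-boundary coupling into the dissipation and dominate the observer-error injection by an exponentially decaying forcing controlled by $\tilde\Phi(t)$ via Theorem \ref{thm:observer}, arriving at $\dot V\le -2dV+\kappa\,\tilde\Phi(0)e^{-\mu t}$. A comparison/ISS argument then yields exponential decay of $||\hat w||_{{\cal H}_1}+|X|$; invoking the invertibility of the backstepping transformation \eqref{bkst} (norm equivalence between $\hat w$ and $\hat u$), the norm equivalences between $u$, $\hat u$, $\tilde u$ induced by \eqref{ztildedef}, and the decomposition $T_{{\rm s}}-T_{{\rm m}}$ into $\hat u$ and $\tilde u$ parts, together with the already-established decay of $\tilde\Phi$, recovers the stated bound on $\hat\Phi$ with rate $d=\min\{\alpha_{{\rm s}}/(16 s_{{\rm r}})+b^2/(4\alpha_{{\rm s}})+h_{{\rm s}},\,c\}$.
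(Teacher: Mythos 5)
Your overall architecture (specialize to Assumption \ref{setup} so that $A=C=0$, establish positivity/monotonicity of the physical variables, run a Lyapunov analysis on the backstepping target system, then transfer back by norm equivalence) matches the paper's, but there is a genuine gap exactly at the point you yourself flag as ``the main obstacle'': you never supply the mechanism that proves $u>0$, $\dot s>0$ and $s(t)<s_{\rm r}$ under the output-feedback law, and without it the circularity you worry about is real. The paper's resolution is to introduce the scalar quantity $Z(t)=U(t)+\gamma u(0,t)=-\frac{\bar{\beta}}{\alpha_{\rm s}}\int_0^{s(t)}f(x)\hat u(x,t)\,dx-f(s(t))X(t)$, i.e.\ precisely the observer-dependent part of the controller, and to exploit the kernel identities $\alpha_{\rm s}f''(x)+bf'(x)-h_{\rm s}f(x)=0$ and $\alpha_{\rm s}f'(0)+(b-\alpha_{\rm s}\gamma)f(0)=0$: these make all distributed and inlet terms cancel, so $Z$ obeys the \emph{closed} scalar ODE $\dot Z=-cZ-\dot s(t)f'(s(t))X(t)-\bar{\beta}f(s(t))\tilde u_x(s(t),t)$. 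Positivity of $Z$ (which fixes the sign of the inlet flux, since $u_x(0,t)-\gamma u(0,t)=-Z(t)$, and hence unlocks the maximum principle for $u$) is then proved by a first-zero contradiction: on any interval where $Z>0$, the maximum principle and Hopf's lemma give $u>0$ and $\dot s>0$; combined with $\tilde u\ge 0$ (Corollary \ref{coro:2}) and $f>0$ this forces $\hat u>0$ and $X<0$, whence $\dot Z>-cZ$ and $Z$ cannot vanish in finite time. Assumption \ref{ass:setpoint} enters solely to guarantee $Z(0)>0$. This auxiliary ODE for $Z$ is the key idea missing from your proposal.

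Your final step also contains a technical flaw: the observer error enters the $(\hat w,X)$ system through the boundary trace $\tilde u_x(s(t),t)$, and a pointwise value of the \emph{derivative} is not controlled by the ${\cal H}_1$ norm $\tilde\Phi(t)$ estimated in Theorem \ref{thm:observer}, so the inequality $\dot V\le -2dV+\kappa\,\tilde\Phi(0)e^{-\mu t}$ you aim for does not follow; the cascade/ISS argument with Theorem \ref{thm:observer} used as a black box breaks down here. The paper instead forms a single Lyapunov function $V=\hat V+\frac{2M_1}{\alpha_{\rm s}}\tilde V$ and absorbs the trace term via $\tilde z_x(s(t),t)^2\le {\rm const}\cdot\|\tilde z_{xx}\|^2$ (valid because $\tilde z_x(0,t)=0$), paying for it with the $-\alpha_{\rm s}\|\tilde z_{xx}\|^2$ dissipation that is available in $\dot{\tilde V}$ but is thrown away if one only retains the ${\cal H}_1$ decay. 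Relatedly, the moving-boundary coupling $\dot s(t)\,g(x-s(t))X(t)$ cannot be ``absorbed into the dissipation'': it is sign-indefinite and $\dot s$ has no a priori bound. The paper keeps it as a term $+a\,\dot s(t)V$ in the differential inequality and concludes with a Gronwall argument using $\int_0^t\dot s\,d\tau=s(t)-s_0\le s_{\rm r}$, which produces the bounded factor $e^{a s_{\rm r}}$ in the final estimate; some device of this kind (bounded total interface displacement) is indispensable and is absent from your outline.
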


The proof of Theorem \ref{theo} is established by showing that \eqref{valid1} and \eqref{valid2} are satisfied and employing a Lyapunov analysis through the remaining of this section. 
 
\subsection{Model validity condition}
Let $Z(t)$ be defined as
\begin{align}
Z(t) =& U(t) + \gm u(0,t) \notag\\
\label{zcontroller}=& - \frac{\bar{\beta}}{\alpha_{{\rm s}}} \int_{0}^{s(t)} f(x) \hat{u}(x,t) {\rm d}x -  f(s(t))X(t). 
\end{align}
The following lemma is stated. 
\begin{lem} \label{physical}
The following properties hold:
\begin{align} \label{Zpositive}
Z(t)>&0 , \quad \forall t \geq 0, \\
\label{property2}u(x,t) >&0, \quad \dot{s}(t) >0 \quad \forall x \in (0,s(t)), \quad \forall t \geq 0, \\
\label{position} s(0)<&s(t)<s_{{\rm r}}, \quad \forall t \geq 0. 
\end{align}
\end{lem}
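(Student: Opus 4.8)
The plan is to run a continuity (``first‑exit‑time'') argument in which the sign of the control quantity $Z(t)$ is the master property that forces the other three. First I would record the simplifications from Assumption \ref{setup}: since $A=C=0$ the kernel reduces to $\phi(x)=\frac{c}{\bar{\beta}(d_1-d_2)}(e^{d_1x}-e^{d_2x})$ with $d_1>0>d_2$, and inspecting $f(x)=\phi'(-x)-\gamma\phi(-x)$ shows $f(x)>0$ for all $x\ge 0$, because $d_1-\gamma>0>d_2-\gamma$ makes both exponential terms carry a positive coefficient. I would also note that Assumption \ref{ass:setpoint} is \emph{exactly} $Z(0)>0$: multiplying through by $f(s_0)>0$ and using $k_{{\rm s}}(T_{{\rm m}}-\hat T_{{\rm s}}(x,0))=\hat u(x,0)$ turns it into $-\frac{\bar{\beta}}{\alpha_{{\rm s}}}\int_0^{s_0}f\hat u(x,0)\,dx-f(s_0)X(0)>0$. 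Finally, the change of variable $z=ue^{-\gamma x}$ (mirroring \eqref{ztildedef}) sends the $u$-system to $z_t=\alpha_{{\rm s}}z_{xx}-\lambda z$, $z(s(t),t)=0$, with the clean identity $z_x(0,t)=-Z(t)$, so that $Z>0\iff z_x(0,t)<0$.

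The core step is the implication, valid on any interval where $Z>0$, that $u>0$ in the interior, $\dot s>0$, and $s<s_{{\rm r}}$. For the first two I would apply the minimum principle and Hopf's lemma to $p:=ze^{\lambda t}$, which solves $p_t=\alpha_{{\rm s}}p_{xx}$: the condition $p_x(0,t)=-e^{\lambda t}Z(t)<0$ rules out a negative boundary minimum at $x=0$ via Hopf, and together with $p(s(t),t)=0$ and a physically valid initial profile $z(\cdot,0)\ge 0$ this gives $z>0$ in the interior by the strong maximum principle; Hopf at the moving face yields $u_x(s(t),t)<0$, hence $\dot s=-\bar{\beta}u_x(s(t),t)>0$. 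Combining Corollary \ref{coro:2} ($\tilde u\ge 0$, so $\hat u=u+\tilde u\ge 0$) with $f>0$ in $Z=-\frac{\bar{\beta}}{\alpha_{{\rm s}}}\int_0^{s(t)}f\hat u\,dx-f(s(t))X$ forces $-f(s(t))X>0$, i.e.\ $s<s_{{\rm r}}$; with $\dot s>0$ this also gives $s>s_0$. All three hold strictly wherever $Z>0$.

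It then remains to show $Z>0$ persists. I would set $t^\ast:=\inf\{t\ge 0:Z(t)=0\}$ and argue by contradiction assuming $t^\ast<\infty$, so $Z>0$ on $[0,t^\ast)$ (the implication chain applies there) and $Z(t^\ast)=0$. To close I would differentiate the control representation along \eqref{err1}--\eqref{err3}, using the kernel identity $\alpha_{{\rm s}}f''+bf'-h_{{\rm s}}f=0$ together with $f(0)=c/\bar{\beta}$ and $\alpha_{{\rm s}}f'(0)+\tfrac{b}{2}f(0)=0$ to collapse every bulk and interior-boundary contribution, obtaining
\begin{align}
\dot Z=-cZ-\bar{\beta}f(s(t))\tilde u_x(s(t),t)-f'(s(t))\dot s(t)X(t).\notag
\end{align}
The middle term is $\ge 0$ by Corollary \ref{coro:2}; if the last term were nonnegative I would conclude $\dot Z(t^\ast)\ge 0$, contradicting a downward crossing of zero and forcing $t^\ast=\infty$, after which the three displayed properties follow immediately.

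The hard part will be exactly the sign of the cross term $-f'(s(t))\dot s(t)X(t)$. On the good interval $\dot s>0$ and $X<0$, so this term inherits the sign of $f'(s(t))$, and $f'$ is not single‑signed: from the explicit kernel one finds $f'(0)=-\frac{bc}{2\alpha_{{\rm s}}\bar{\beta}}<0$, with a single zero at $x_0=\frac{\alpha_{{\rm s}}}{\sqrt{b^2+4\alpha_{{\rm s}}h_{{\rm s}}}}\ln\frac{b+\sqrt{b^2+4\alpha_{{\rm s}}h_{{\rm s}}}}{\sqrt{b^2+4\alpha_{{\rm s}}h_{{\rm s}}}-b}$ beyond which $f'>0$. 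Thus the decisive work is to show that at the critical configuration either the nonnegative contribution $-\bar{\beta}f(s)\tilde u_x(s,t)$ dominates, or that the interface remains in the favorable region $s>x_0$; this difficulty is special to the convection/heat‑loss model here, since in the pure‑diffusion Stefan problem $f$ is constant and the offending term disappears entirely. Once $Z>0$ is secured for all $t$, the lemma follows directly from the implication chain of the second paragraph.
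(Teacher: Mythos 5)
Your proposal is, in structure, the paper's own proof: the same quantity $Z(t)$ from \eqref{zcontroller}, the same first-crossing contradiction setup, the same maximum-principle and Hopf arguments giving $u>0$, $\dot s>0$ and hence $X(t)<0$ on any interval where $Z>0$, and the same differential identity
\[
\dot Z(t) = -cZ(t) - \dot s(t)\, f'(s(t))\,X(t) - \bar{\beta}\, f(s(t))\,\tilde u_x(s(t),t),
\]
obtained from the kernel identities under $A=C=0$. Your two explicit additions are correct and worthwhile: Assumption \ref{ass:setpoint} is indeed equivalent to $Z(0)>0$ (the paper needs this for its Gronwall step but never says so), and the identity $z_x(0,t)=-Z(t)$ under $z=ue^{-\gamma x}$ is a clean way to run the Hopf argument at the inlet.

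However, the proposal is incomplete at the one step that carries the lemma: you never prove that $Z$ cannot reach zero, because the sign of the cross term $-\dot s(t) f'(s(t))X(t)$ is left open, and neither of your suggested escape routes is substantiated (the nonnegative term $-\bar{\beta} f(s)\tilde u_x(s(t),t)$ can vanish identically, e.g.\ for an exact initial estimate $\tilde u\equiv 0$, and nothing confines $s(t)$ to the region where $f'>0$ if $s_0<x_0$). So, judged as a proof, this has a genuine gap. What you should also know is that the paper's proof contains the same gap, asserted rather than flagged: in passing from \eqref{contODE} to \eqref{contradict4} it concludes $\dot Z>-cZ$ from $\dot s>0$ and $X<0$ alone, which tacitly requires $f'(s(t))\ge 0$, whereas your computation $f'(0)=-\tfrac{bc}{2\alpha_{{\rm s}}\bar{\beta}}<0$, with $f'$ changing sign only at $x_0=\tfrac{\alpha_{{\rm s}}}{\sqrt{b^2+4\alpha_{{\rm s}}h_{{\rm s}}}}\ln\tfrac{b+\sqrt{b^2+4\alpha_{{\rm s}}h_{{\rm s}}}}{\sqrt{b^2+4\alpha_{{\rm s}}h_{{\rm s}}}-b}>0$, shows that inequality can fail whenever $s(t)<x_0$. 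In other words, you have faithfully reproduced the paper's argument and, beyond that, isolated an unjustified inequality in it. A complete proof would need either an added hypothesis (e.g.\ $s_0\ge x_0$, which together with $\dot s>0$ on the good interval keeps $f'(s(t))>0$ and makes the paper's Gronwall step valid) or a genuinely different treatment of the cross term, for instance via $-\dot s f'(s)X=-\tfrac{d}{dt}\bigl[f(s(t))X(t)\bigr]+f(s(t))\dot s(t)$; neither you nor the paper supplies such an argument.
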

\begin{proof} 
Taking the time derivative of \eqref{zcontroller} along the solution of \eqref{err1}--\eqref{err3}, we have 
\begin{align} 
\dot{Z}(t) = & - c Z(t)  -\dot{s}(t) \left(\frac{\bar{\beta}}{\alpha_{{\rm s}}} f(s(t))C + f'(s(t)) \right) X(t) \notag\\
&+ \bar \beta f(s(t)) \tilde{u}_{x}(s(t),t) \notag\\
& + \left( \frac{\bar \beta}{\alpha_{{\rm s}}} \{ \alpha_{{\rm s}}f'(s(t)) + b f(s(t))\} C - f(s(t))A  \right) X(t)\notag\\
& -  \frac{\bar \beta}{\alpha_{{\rm s}}} \{ \alpha_{{\rm s}}f'(0) + (b - \alpha_{{\rm s}} \gm) f(0)\} \hat u(0,t) \notag\\
& -  \frac{\bar \beta}{\alpha_{{\rm s}}}\int_{0}^{s(t)} ( \alpha_{{\rm s}} f''(x) + b f'(x) - h_{{\rm s}} f(x)) \hat u(x,t)  {\rm d}x .  \label{Zdot1} 
\end{align}
Taking into account $A = C = 0$, the differential equation for $\phi$ in \eqref{gainODE} is given by
\begin{align} 
	 \alpha_{{\rm s}} \phi''(x) - b \phi'(x) - h_{{\rm s}} \phi(x) = 0. 
\end{align}
Thus, recalling $ f(x) = \phi'(-x) - \gm \phi(-x)$, it holds that  
\begin{align} 
	&\alpha_{{\rm s}} f''(x) + b f'(x) - h_{{\rm s}} f(x)\notag\\
	=&\left( \alpha_{{\rm s}} \phi'''(-x) - b \phi''(-x) - h_{{\rm s}} \phi'(-x) \right) \notag\\
	&- \gm \left( \alpha_{{\rm s}} \phi''(-x) - b \phi'(-x) - h_{{\rm s}} \phi(-x) \right) \notag\\
	=& 0 . \label{integ0} 
\end{align}
Moreover, we have 
\begin{align} 
	\alpha_{{\rm s}}f'(0) + (b - \alpha_{{\rm s}} \gm) f(0) = 0. \label{boundary0} 
\end{align}
Substituting \eqref{integ0}, \eqref{boundary0}, and $A = C = 0$ into \eqref{Zdot1}, we obtain 
\begin{align}
\dot{Z}(t) =& - cZ(t) - \dot{s}(t)  f'(s(t))X(t) \notag\\
&  - \bar \beta f(s(t)) \tilde{u}_{x}(s(t),t), \label{contODI}\\
\geq & - cZ(t) - \dot{s}(t)  f'(s(t))X(t), \quad \forall t \geq 0,\label{contODE}
\end{align}
where we used Corollary \ref{coro:2} and $f(x)>0$ for the derivation from \eqref{contODI} to \eqref{contODE}. 

We prove \eqref{Zpositive} by contradiction approach. Assume that \eqref{Zpositive} is not valid, which implies $\exists t^*>0$ such that 
\begin{align}\label{assum}
Z(t) >0, \quad \forall t\in (0,t^*), \quad \quad Z(t^*) = 0. 
\end{align} 
Similarly to Lemma \ref{lem:negative}, by Maximum principle and Hopf's lemma, we get 
\begin{align} \label{contradict0} 
u(x,t)>0, \quad  \dot{s}(t) > 0, \quad \forall x\in(0,s(t)), \quad \forall t\in(0,t^*), 
\end{align}
which, with the help of Lemma \ref{lem:negative}, leads to 
\begin{align} \label{contradict1} 
\hat{u}(x,t) >& 0, \quad \forall x\in(0,s(t)), \quad \forall t\in(0,t^*), \\
s(t) >& s_0>0, \quad \forall t\in(0,t^*). \label{contradict2} 
\end{align} 
Applying \eqref{assum}, \eqref{contradict1}, and \eqref{contradict2} to \eqref{zcontroller} with $f(x)>0$ leads to 
\begin{align} 
X(t)<0, \quad \forall t\in(0, t^*). 	\label{contradict3} 
\end{align}
Therefore, applying \eqref{contradict0} and \eqref{contradict3} to \eqref{contODE} leads to 
\begin{align} \label{contradict4} 
\dot{Z}(t) > -cZ(t), \quad \forall t\in(0,t^*). 
\end{align}
Applying Gronwall's inequality to \eqref{contradict4} leads to the inequality regarding the solution of the differential equation, namely,  
\begin{align}
Z(t) \geq Z(0) e^{-ct}, \quad \forall t\in(0,t^*]. 
\end{align} 
Thus, we have $Z(t^*) \geq Z(0) e^{-ct^*} >0$ , which contradicts with the assumption \eqref{assum}. Hence, \eqref{Zpositive} is proved. Then, by Maximum principle, \eqref{property2} holds. Imposing \eqref{Zpositive} and \eqref{property2} on \eqref{zcontroller},  we obtain $X(t)<0$ which leads to \eqref{position}. 
\end{proof} 

\subsection{Stability analysis}
Taking into account $A = C = 0$, we study the stability of the target system, 
\begin{align}\label{tarPDEtheo}
 \hat w_{t}(x,t) =& \alp_{{\rm s}} \hat w_{xx}(x,t) - b \hat w_{x}(x,t) - h_{{\rm s}} \hat w(x,t) \notag\\
&+  \dot{s}(t) g(x-s(t)) X(t) \notag\\
&- \bar{\beta} \phi(x-s(t)) \tilde{u}_{x}(s(t),t), \quad 0<x<s(t) \\
\label{tarBC1theo} \hat w_{x}(0,t) =& \gm \hat w(0,t), \\
\label{tarBC2theo} \hat w(s(t),t) =& 0, \\
 \label{tarODEtheo} \dot{X}(t) =&  - c X(t) - \bar{\beta} \hat w_{x}(s(t),t) + \bar{\beta} \tilde u_{x}(s(t),t). 
  \end{align}
  Let $\hat z$ be a variable defined by 
  \begin{align} 
  \hat z(x,t) = \hat w(x,t) e^{- \gm x}.  
  \end{align} 
Recalling $\tilde z := \tilde u e^{-\gm x}$, we have the following $(\hat z, X)$-system 
  \begin{align} 
  \label{zPDE}
\hat z_{t}(x,t) =& \alp_{{\rm s}} \hat z_{xx}(x,t) - \lambda \hat z(x,t) +  \dot{s}(t) g(x-s(t)) X(t) e^{-\gm x}\notag\\
& - \bar{\beta} \phi(x-s(t))  \tilde{z}_{x}(s(t),t) , \quad 0<x<s(t) \\
\label{zBC1} \hat z_{x}(0,t) =& 0, \\
\label{zBC2} \hat z(s(t),t) =& 0, \\
 \label{zODE} \dot{X}(t) =&  - c X(t) - \bar{\beta} \hat z_{x}(s(t),t) e^{\gm s(t)} + \bar{\beta} \tilde z_{x}(s(t),t) e^{\gm s(t)}, 
  \end{align}
  where $\lambda :=  h_{{\rm s}} + \frac{b^2}{4 \alp_{{\rm s}}}$. Consider the following functional  
\begin{align}  \label{V1def}
\hat V_1 = \frac{1}{2} \int_0^{s(t)} \hat  z(x,t)^2 {\rm d}x 	. 
\end{align}
Taking the time derivative of \eqref{V1def} along the solution of \eqref{zPDE}--\eqref{zBC2} leads to 
\begin{align} 
	\dot{\hat{V}}_1 =& - \alp_{{\rm s}} || \hat z_{x}||^2  - \lambda ||\hat z||^2  \notag\\
	& + \dot{s}(t) X(t)\int_0^{s(t)} \hat z(x,t) e^{-\gm x} g(x-s(t))  {\rm d}x \notag\\
	&- \bar{\beta}  \tilde{z}_{x}(s(t),t) \int_0^{s(t)} \hat z(x,t) \phi(x-s(t)) {\rm d}x. \label{V1timeder}
\end{align}
Applying Young's and Cauchy-Schwarz inequalities to the last two lines in \eqref{V1timeder}, we get 
\begin{align} 
 &\dot{s}(t) X(t)\int_0^{s(t)} \hat z(x,t) e^{-\gm x}  g(x-s(t))  {\rm d}x \notag\\
   \leq &  \frac{\dot{s}(t)}{2} \left( X(t)^2 + || g||^2 \cdot || \hat z ||^2  \right) , \label{YCS1} 
   \end{align} 
   \begin{align} 
& - \bar{\beta}  \tilde{z}_{x}(s(t),t) \int_0^{s(t)} \hat z(x,t)   \phi(x-s(t)) {\rm d}x \notag\\
  \leq &\frac{ \bar{\beta}^2 || \phi ||^2}{2 h_{{\rm s}}}  \tilde{z}_{x}(s(t),t)^2 + \frac{h_{{\rm s}}}{2}  ||\hat z||^2 . \label{YCS2}
 \end{align} 
 Thus, applying \eqref{YCS1} and \eqref{YCS2} to \eqref{V1timeder}, one can obtain  
 \begin{align} 
 \dot{\hat{V}}_1 \leq & - \alp_{{\rm s}} || \hat z_{x}||^2  - \lambda  || \hat z||^2  \notag\\
	& +\frac{\dot{s}(t)}{2} \left( X(t)^2 + || g||^2 \cdot || \hat z ||^2  \right)  \notag\\
	&+ \frac{ \bar{\beta}^2 || \phi ||^2}{2 h_{{\rm s}}}  \tilde{z}_{x}(s(t),t)^2. \label{V1fin} 
\end{align} 
Consider the following functional  
\begin{align} \label{V2def}
 \hat V_2 = \frac{1}{2} \int_0^{s(t)} \hat z_{x}(x,t)^2 {\rm d}x 	. 
\end{align}
Taking the time derivative of \eqref{V2def} along the solution of \eqref{zPDE}--\eqref{zBC2} leads to (note that \eqref{zBC2} yields $z_t(s(t),t) = - \dot{s}(t) z_x(s(t),t)$), 
\begin{align} 
	\dot{ \hat{V}}_2 = & \frac{\dot{s}(t)}{2} \hat z_{x}(s(t),t)^2 + \int_0^{s(t)} \hat z_{x}(x,t) \hat z_{xt}(x,t) {\rm d}x \notag\\
	= & \frac{\dot{s}(t)}{2}\hat z_{x}(s(t),t)^2 + \hat z_{x}(s(t),t) \hat z_{t}(s(t),t) - \hat z_{x}(0,t) \hat z_{t}(0,t) \notag\\
	& - \alp_{{\rm s}} || \hat z_{xx}||^2   - \lambda  || \hat z_{x}||^2 \notag\\
	& + \dot{s}(t) X(t)\int_0^{s(t)} \hat z_{xx}(x,t) e^{-\gm x}  g(x-s(t))  {\rm d}x \notag\\
	&- \bar{\beta}  \tilde{z}_{x}(s(t),t) \int_0^{s(t)} \hat z_{xx}(x,t)\phi(x-s(t)) {\rm d}x, \notag\\
		= & -\frac{\dot{s}(t)}{2} \hat z_{x}(s(t),t)^2  - \alp_{{\rm s}} || \hat z_{xx}||^2   - \lambda  || \hat z_{x}||^2 \notag\\
	& + \dot{s}(t) X(t) \left(\hat z_{x}(s(t),t) g(0) e^{-\gm s(t)} \right. \notag\\
	& \left.  - \int_0^{s(t)} \hat z_{x}(x,t) ( g'(x-s(t)) - \gm g)e^{-\gm x}   {\rm d}x \right) \notag\\
	&- \bar{\beta}  \tilde{z}_{x}(s(t),t) \int_0^{s(t)} \hat z_{xx}(x,t) \phi(x-s(t)) {\rm d}x.	\label{V2der}
\end{align}
By Cauchy-Schwarz and Young's inequalities, for $\delta_1>0$, it holds that 
\begin{align} 
&g(0) X(t) \hat z_{x}(s(t),t)e^{-\gm s(t)} \leq \frac{g(0)^2}{2} X(t)^2 + \frac{1}{2} \hat z_{x}(s(t),t)^2, \label{CSY1} \\
&-\bar{\beta}  \tilde{z}_{x}(s(t),t) \int_0^{s(t)} \hat z_{xx}(x,t)  \phi(x-s(t)) {\rm d}x\notag\\
& \leq \frac{\delta_1}{2} \bar{\beta}^2  \tilde{z}_{x}(s(t),t)^2 + \frac{1}{2 \delta_1} || \phi ||_{L_2}^2 || \hat z_{xx}||_{L_2}. \label{CSY2}
\end{align} 
Applying \eqref{CSY1} and \eqref{CSY2} to \eqref{V2der} with setting $\delta_1 = \frac{2 || \phi ||_{L_2}^2}{\alpha_{{\rm s}}} $ yields
\begin{align} 
\dot{ \hat{V}}_2 \leq  &  - \frac{\alp_{{\rm s}}}{2} || \hat z_{xx}||^2   - \lambda || \hat z_{x}||^2 \notag\\
	& + \frac{\dot{s}(t)}{2} \left( \bar g X(t)^2 + ||\hat z_{x}||^2 \right) + \frac{ \bar{\beta}^2  || \phi ||_{L_2}^2}{\alpha_{{\rm s}}}  \tilde{z}_{x}(s(t),t)^2  , \label{V2fin} 
\end{align} 
where $\bar g : = \max_{s(t) \in (0, s_{{\rm r}})} (g(0)^2 + g(-s(t))^2 + || g'||^2 ) $. Let $\hat V_3$ be Lyapunov functional defined by 
\begin{align} 
\hat V_3 = \frac{1}{2} X(t)^2 . \label{V3def} 
\end{align} 
Taking the time derivative of \eqref{V3def} together with \eqref{zODE} leads to 
\begin{align} 
\dot{\hat{V}}_3 &= - c X(t)^2  - \bar{\beta}\hat z_{x}(s(t),t) e^{\gm s(t)} X(t)+ \bar{\beta} \tilde z_{x}(s(t),t) e^{\gm s(t)}X(t). \label{V3timeder}
\end{align} 
Applying Young's and Agmon's inequalities to \eqref{V3timeder}, we get 
\begin{align} 
\dot{\hat{V}}_3 &\leq - \frac{c}{2} X(t)^2 + \frac{\bar \beta^2 e^{2 \gm s_{{\rm r}}} }{c} \hat z_{x}(s(t),t) ^2 + \frac{\bar \beta^2 e^{2\gm s_{{\rm r}}} }{ c} \tilde z_{x} (s(t),t)^2, \notag\\
&\leq - \frac{c}{2} X(t)^2 + \frac{4 \bar \beta^2 s_{{\rm r}}  e^{2 \gm s_{{\rm r}}} }{c} || \hat z_{xx}||^2 + \frac{\bar \beta^2 e^{2\gm s_{{\rm r}}} }{ c} \tilde z_{x} (s(t),t)^2. \label{V3fin}
\end{align} 
Consider the following functional  
\begin{align}\label{H1norm}
\hat V = \hat V_1 + \hat V_2 + p \hat V_3  , 
\end{align}
where $p = \frac{c\alpha_{{\rm s}} e^{-2 \gm s_{{\rm r}}}}{16 \bar \beta^2 s_{{\rm r}}}$. Combining \eqref{V1fin}, \eqref{V2fin}, and \eqref{V3fin}, the time derivative of \eqref{H1norm} is shown to satisfy the following inequality 
\begin{align}
\dot{\hat V} \leq &  - \frac{\alp_{{\rm s}}}{4} || \hat z_{xx}||^2 - (\alp_{{\rm s}} + \lambda)  || \hat z_{x}||^2  - \lambda  || \hat z||^2  - \frac{pc}{2} X(t)^2 \notag\\
	& +\frac{\dot{s}(t)}{2} \left( (1 + \bar g) X(t)^2 + || g||^2 || \hat z ||^2 + || \hat z_{x}||^2  \right) \notag\\
	& +\left( \bar{\beta}^2 || \phi ||^2 \left( \frac{ 1}{2 h_{{\rm s}}} + \frac{1}{\alpha_{{\rm s}}} \right)  +  \frac{\alpha_{{\rm s}}}{16 s_{{\rm r}}}  \right) \tilde{z}_{x}(s(t),t)^2, \notag\\
	\leq &  - \frac{\alp_{{\rm s}}}{4} || \hat z_{xx}||^2 - (\alp_{{\rm s}} + \lambda)  || \hat z_{x}||^2  - \lambda  || \hat z||^2  - \frac{pc}{2} X(t)^2 \notag\\
	& +a \dot{s}(t) \hat V  +M_1 ||\tilde{z}_{xx}||^2, \label{Vdot}
\end{align}
where $a = \max\{\frac{(1 + \bar g)}{p}, ||g||^2, 1\}$, $M_1 = 4 s_{{\rm r}}\left( \bar{\beta}^2 || \phi ||^2 \left( \frac{ 1}{2 h_{{\rm s}}} + \frac{1}{\alpha_{{\rm s}}} \right)  +  \frac{\alpha_{{\rm s}}}{16 s_{{\rm r}}}  \right)$. Thus, using the Lyapunov function $\tilde V$ in \eqref{defV1tilde} for the estimation error $\tilde z$-system \eqref{ztilde1}--\eqref{ztilde3}, we define the Lyapunov function for the total $(\hat z, X, \tilde z)$-system as 
\begin{align} 
V = \hat V + \frac{2 M_1}{\alpha_{{\rm s}}} \tilde V. 	
\end{align}
Then, by combining the inequalities \eqref{Vtildeineq} and \eqref{Vdot}, we arrive at 
\begin{align} 
\dot V \leq &	 - \frac{\alp_{{\rm s}}}{4} \left( || \hat z_{xx}||^2 +\frac{2 M_1}{\alpha_{{\rm s}}} || \tilde z_{xx}||^2 \right)  \notag\\
&- (\alp_{{\rm s}} + \lambda) \left( || \hat z_{x}||^2 +\frac{2 M_1}{\alpha_{{\rm s}}} || \tilde z_{x}||^2 \right) \notag\\
&  - \lambda \left( || \hat z||^2 +\frac{2 M_1}{\alpha_{{\rm s}}} || \tilde z||^2 \right)    - \frac{pc}{2} X(t)^2  +a \dot{s}(t) \hat V , \notag\\
\leq &	   - \left( \frac{\alp_{{\rm s}}}{16 s_{{\rm r}}} + \lambda \right) \left( || \hat z_{x}||^2 +|| \hat z||^2 +\frac{2 M_1}{\alpha_{{\rm s}}} || \tilde z_{x}||^2 + \frac{2 M_1}{\alpha_{{\rm s}}} || \tilde z||^2\right) \notag\\
&     - \frac{pc}{2} X(t)^2  +a \dot{s}(t) \hat V , \notag\\
\leq & - d V + a \dot{s}(t) V , \label{Vineq}
\end{align}
where 
\begin{align} 
d = \min\left\{ \frac{\alp_{{\rm s}}}{16 s_{{\rm r}}} + \lambda,	  c\right\}. 
\end{align}
Following the procedure in \cite{koga2017backstepping}, the inequality \eqref{Vineq} with \eqref{property2} and \eqref{position} leads to the exponential norm estimate 
\begin{align} 
V(t) \leq e^{a (s(t) - s_0 ) } V(0) e^{- dt} \leq e^{a s_{{\rm r}} } V(0) e^{- dt}. 
\end{align}
Let $\Psi (t) =||w||_{{\cal H}_1}^2 + X(t)^2$. Then, we have $\underline{M} V \leq  \Psi (t) \leq \bar{M} V$ where $ \bar{M} = 2\max \{ e^{2 \gm s_{{\rm r}} } (1+ \gm^2) , \fr{1}{p} \}$, $\underline{M} = \left(\max \{ 2 (1+ \gm^2), \fr{p}{2} \}\right)^{-1} $. Therefore, $\Psi(t) \leq \fr{\bar{M}}{\underline{M}}  e^{a s_{{\rm r}} } \Psi(0) e^{- dt}$, which proves the exponential stability of the target $w$-system in ${\cal H}_1$-norm. Since the $u$-system in \eqref{err1}-\eqref{err3} and the target $w$-system in \eqref{tarPDE}-\eqref{tarODE} have equivalent stability property due to the invertibility of the backstepping transformation \eqref{bkst}, the exponential estimate in ${\cal H}_1$-norm is also guaranteed for the $u$-system, which concludes  the proof of Theorem \ref{theo}.

\section{Simulation}\label{simulation}
\begin{table}
\centering 
\vspace{5mm}

\caption{HDPE parameters obtained by \cite{tadmor2006principles}.}
\begin{tabular}{lll}
\hline
melting point & $T_{{\rm m}}$& $\unit[135]{^{\circ}C}$\\
specific heat solid & $c_{{\rm s}}$ & $\unit[1895]{J kg^{-1}K^{-1}}$\\
specific heat melt & $c_{{\rm l}}$ & $\unit[2640]{J kg^{-1}K^{-1}}$\\
therm. conduct. solid & $k_{{\rm s}}$ &$\unit[0.373]{Wm^{-1}K^{-1}}$\\
therm. conduct. melt & $k_{{\rm l}}$ &$\unit[0.324]{Wm^{-1}K^{-1}}$\\
solid density & $\rho_{{\rm s}}$ & $\unit[955]{kgm^{-3}}$\\
melt density & $\rho_{{\rm l}}$ & $\unit[780]{kgm^{-3}}$\\
heat of fusion&$\Delta H$ &$\unit[39000]{J kg^{-1}}$\\
\hline
\end{tabular}
\label{table:tadmorparam}
\end{table}

\begin{figure}[t]
\begin{center}
\includegraphics[width=7.0cm]{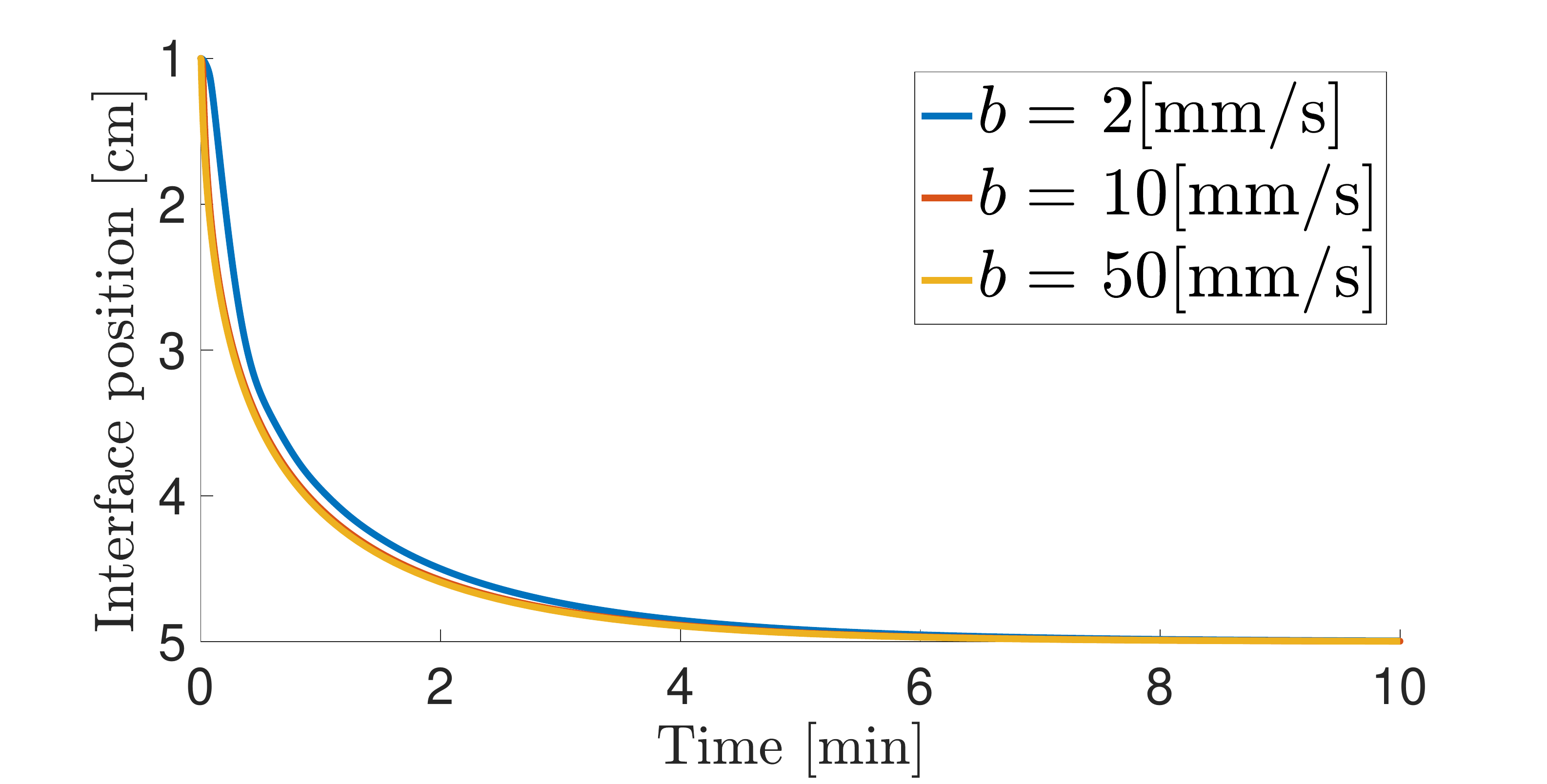}\\
\caption{The closed-loop response of the interface position. For each operating speed, the interface position is stabilized after 6[min]. }
\label{fig:interface}
\includegraphics[width=7.0cm]{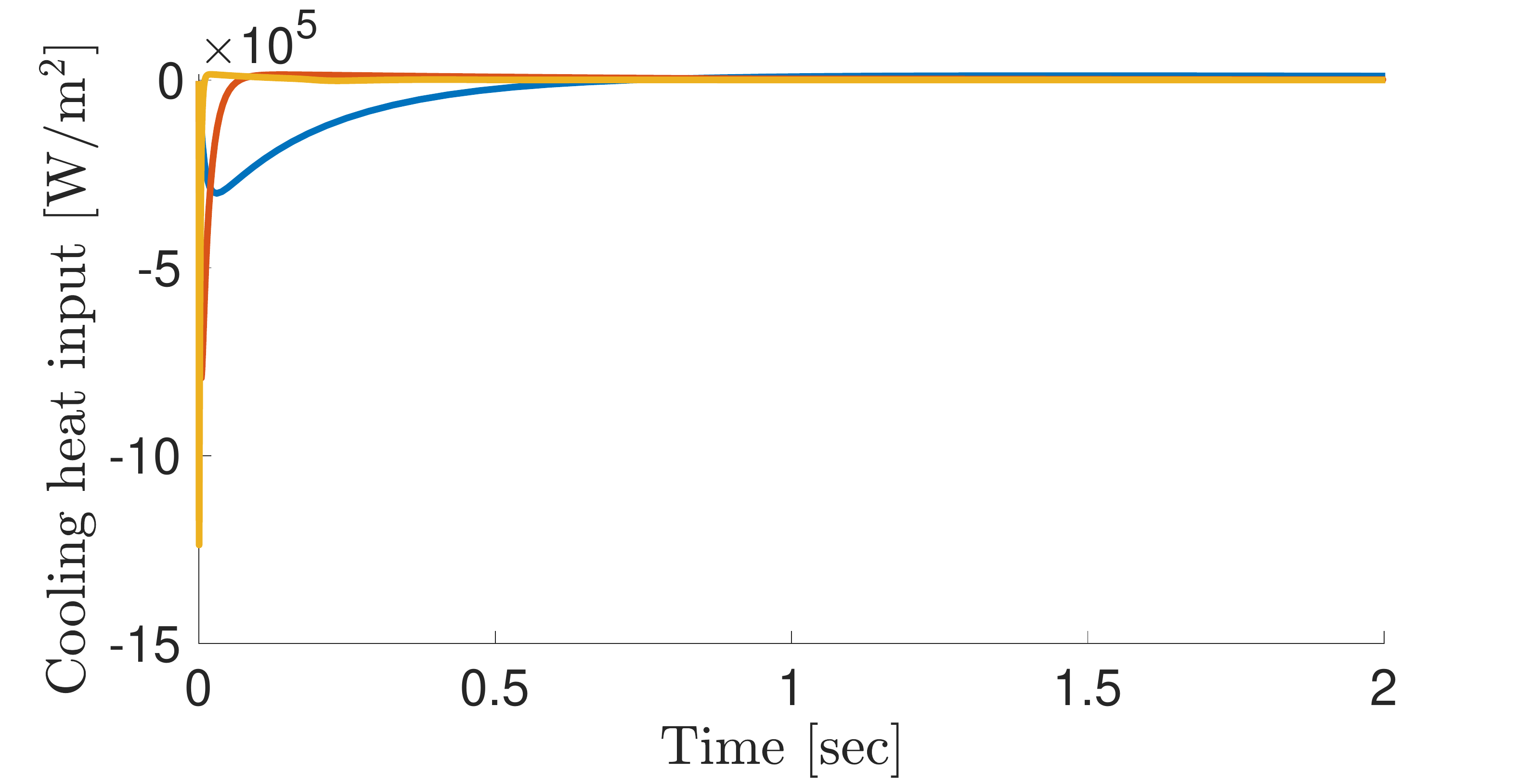}\\
\caption{The closed-loop response of the output feedback control. The transient gets shorter as the operation gets faster.}
\label{fig:control}
\includegraphics[width=7.0cm]{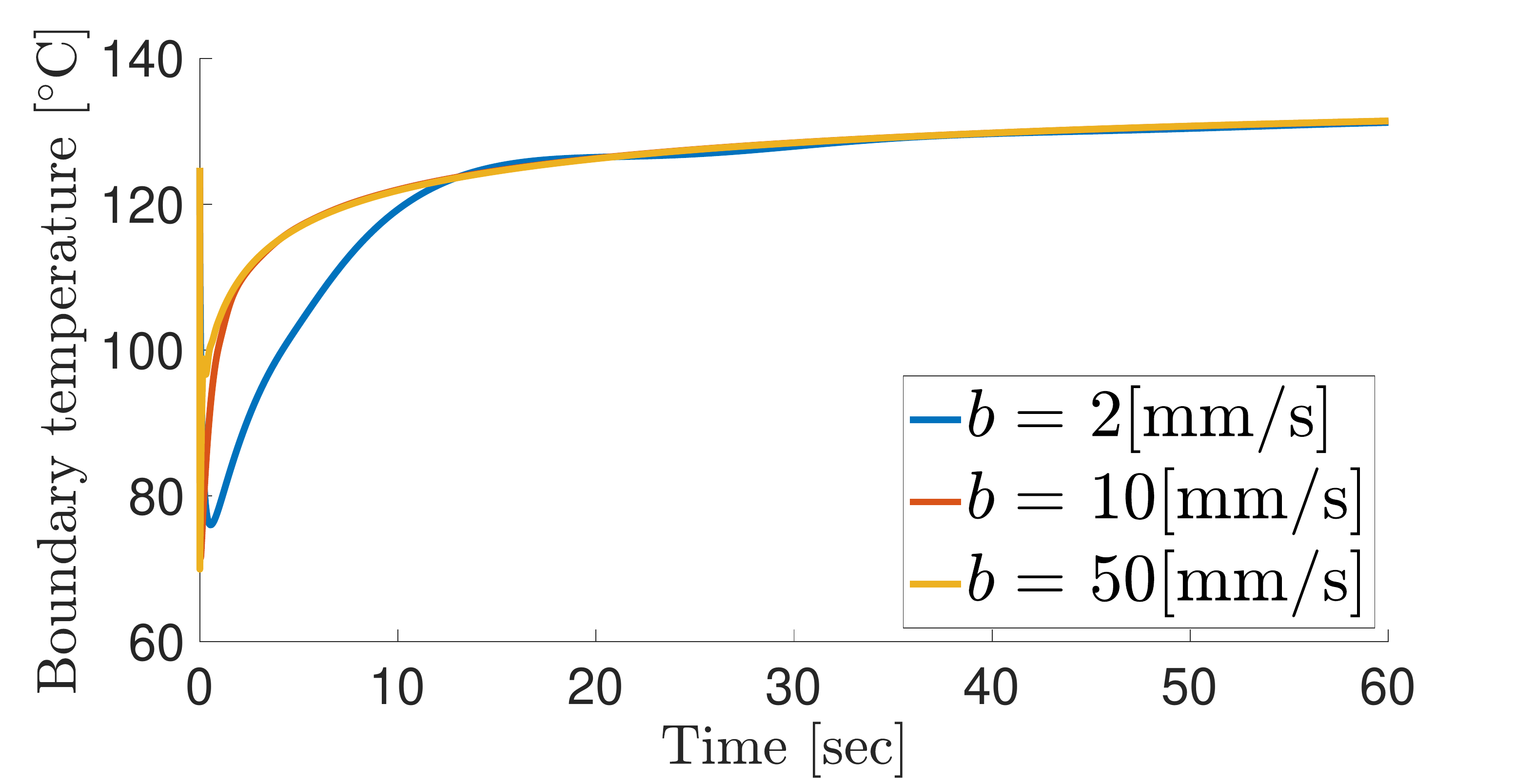}\\
\caption{The response of the boundary temperature, which maintains reasonable value for the material and safe operation. }
\label{fig:temperature}
\end{center}
\end{figure}

\begin{figure}[t]
\begin{center}
\includegraphics[width=7.0cm]{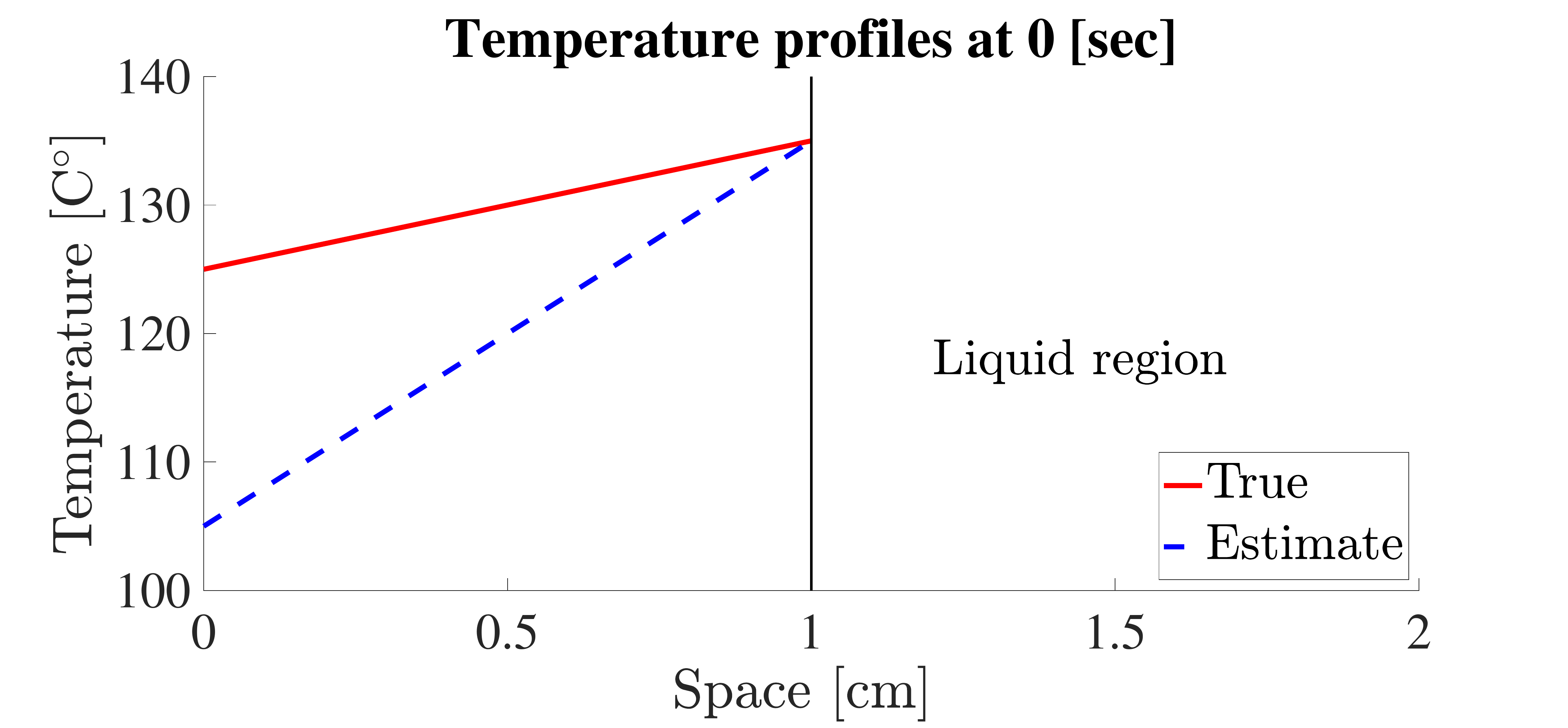}\\
\includegraphics[width=7.0cm]{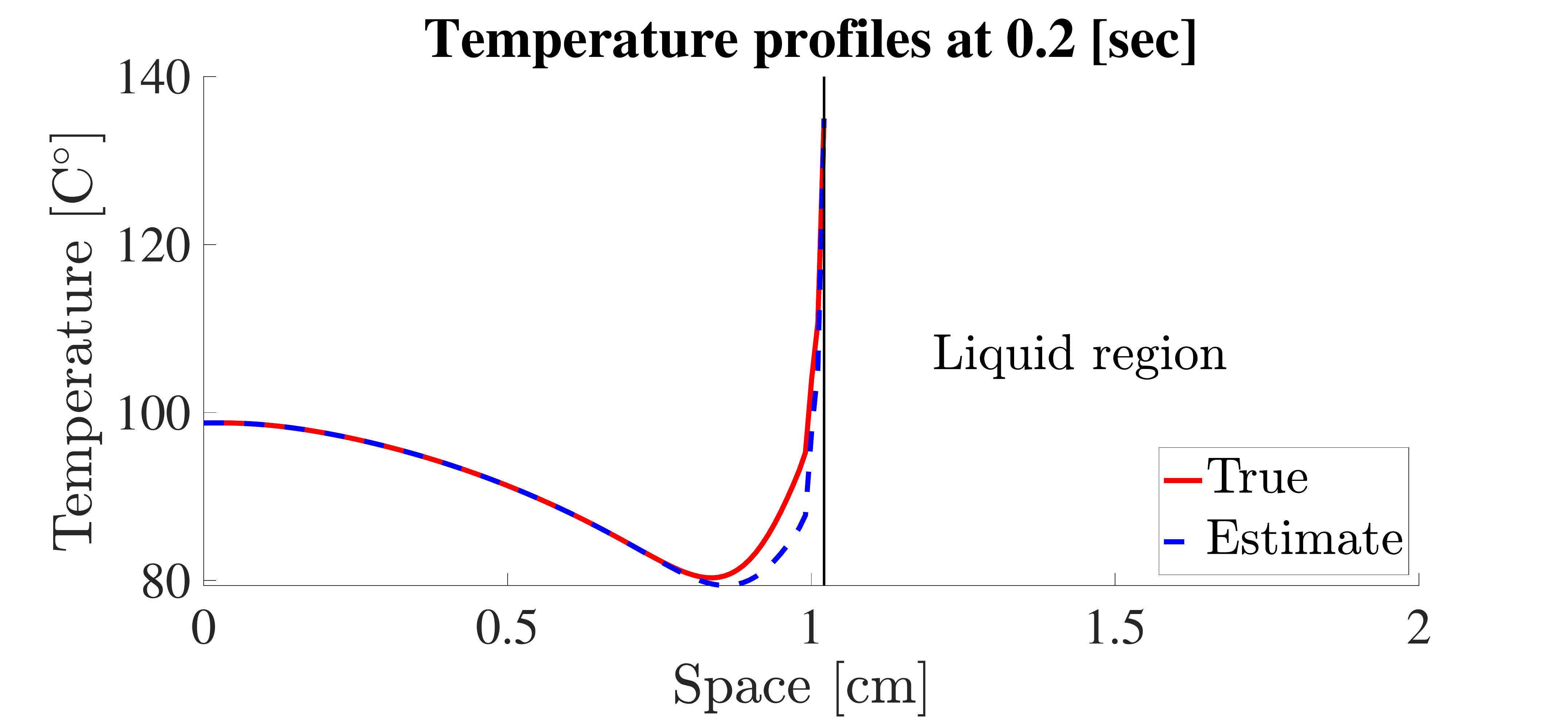}\\
\includegraphics[width=7.0cm]{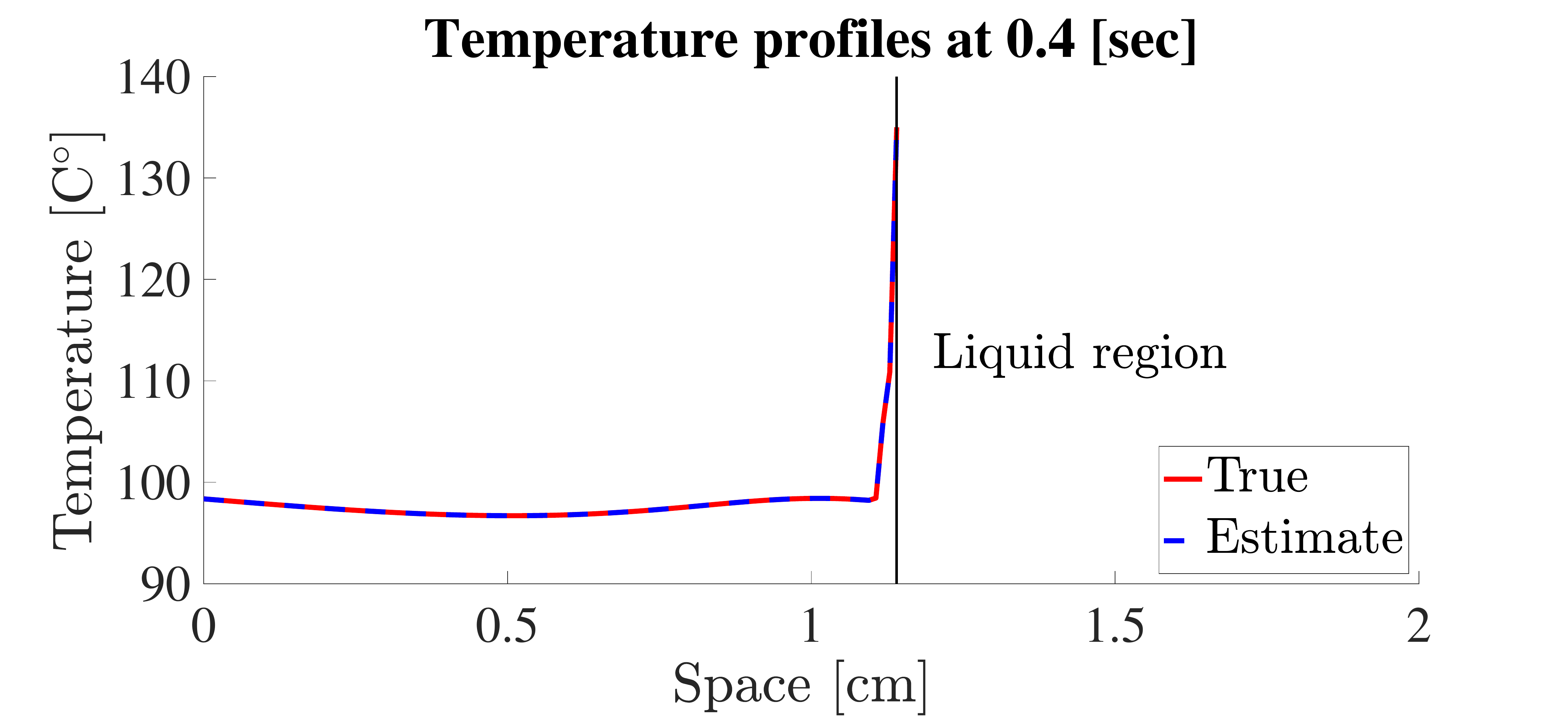}\\
\caption{The comparison of the true and estimated temperature profiles.}
\label{fig:estimate}
\end{center}
\end{figure}

For numerical study to investigate the controller's performance in different operating conditions, we have employed the simulation of the original "two-phase" model governed by \eqref{sys1}--\eqref{sys5} without assuming that the liquid phase is at stead-state, run the PDE observer given in \eqref{obs1}--\eqref{obs3}, and implemented the associated output feedback controller \eqref{qfcont}. We used the boundary immobilization method to obtain a fixed
boundary system and discretized the system with finite differnces to construct  a finite dimensional representation of the model. Using Matlab's ode23s solver, we simulated the setup with three different advection speeds ranging from \unit[2]{mm/s} to \unit[50]{mm/s}, to cover a wide spectrum of operating modes. The material parameters  are chosen from
\cite{tadmor2006principles},  in which distinct
values for high density polyethylene in solid and liquid state were
experimentally derived (see Table~\ref{table:tadmorparam}). The extruder length is $L=\unit[10]{cm}$ and the
constant barrel temperature is set to $T_{{\rm b}} = \unit[145]{^\circ C}$. Further, the auxiliary heat input was chosen $q_{\rm{m}}^* = \unitfrac[100]{W}{m^2}$. The initial
temperature is set as a linear profile, which satisfies the boundary
condition~\eqref{sys4}. For each advection speed, the control parameter $c$ is adjusted to generate a
reasonable boundary temperature. While higher advection
speeds enable shorter convergence times, it is only achieved when the control
gain $c$ is also sufficiently large. However, larger values in the control
gain will initially produce very low boundary temperature values. For each configuration, we adjusted the control gain as $c=$0.2 for $b=$2[mm/s], $c=$1.0 for $b=$10[mm/s], and $c=$5.0 for $b=$50[mm/s], to produce comparable temperature peaks at the inlet and prevent unrealistic values. The closed-loop responses of the interface position $s(t)$, the boundary control input $q_{{\rm f}}(t)$, and the boundary temperature $T_{{\rm s}}(0,t)$ are shown in Fig.~\ref{fig:interface}, Fig.~\ref{fig:control} and Fig.~\ref{fig:temperature}, respectively. The interface responses, depicted in Fig.~\ref{fig:interface}, have quite  similar behaviors in all three setups. However, the control input, shown in Fig.~\ref{fig:control}, appears to act faster for higher advection speeds but exhibits a similar qualitative behavior. Similar properties were observed in the boundary temperature response in Fig.\ref{fig:temperature}. Note that all the three figures have different time ranges.

Moreover, for the fast operating condition $b=$50[mm/s], the comparison of the estimated temperature profile and the true temperature profile at $t=$0[sec], 0.2[sec], 0.4[sec] are shown in Fig. \ref{fig:estimate}. We can observe that the estimated temperature profile gets almost same as the true  temperature profile at 0.4[sec], associated with the expansion of the solid granules' region. Hence, the convergence of the designed observer to the true temperature profile is approximately 1000 times faster than the convergence of the interface position to the setpoint position, which is sufficiently quick performance of the temperature estimation. 

For comparison, we also tested a closed loop setup with PI control given by
\begin{equation}
q_{\rm{f}}(t) = q_{\rm{f}}^* + K_{\rm{P}}(s(t)-s_{\rm{r}}) + K_{\rm{I}}\int_{t_0}^t(s(\tau)-s_{\rm{r}}) d\tau,   
\end{equation} 
where $K_{\rm{P}}$ and $K_{\rm{I}}$ are gain parameters to be tuned in order to achieve the desired performance. However, for any choice of the parameters we have tried, the closed-loop response of the interface position does not stabilize at the sepoint $s_{{\rm r}}$. Fig. \ref{fig:PI} depicts the responses under PI control with a relatively suitable choice of the gains. The lower plot in Fig. \ref{fig:PI} shows that the temperature at the inlet of the extruder gets above the melting temperature at approximately 2.9 [min], which violates the validity condition~\eqref{valid2} of the solid polymer temperature, while our proposed output feedback control guarantees to satisfy the condition under the closed-loop system. Such an overshoot behavior beyond the melting temperature might be reduced by PID control, however, the velocity of the interface position is nearly impossible to measure online and the differentiator generally causes high noise. Overall, the proposed output feedback control law illustrates superior performance to PI control in terms of both convergence to the setpoint and the validity condition. 

From the simulations, we conclude that our control design achieves a stable interface position, even with very fast
advection speeds \unit[50]{mm/s}, with which a particle inserted
in the inlet will travel in two seconds through the extruder, when
assuming a \unit[10]{cm} extruder.

\begin{figure}[t]
\begin{center}
\includegraphics[width=8.0cm]{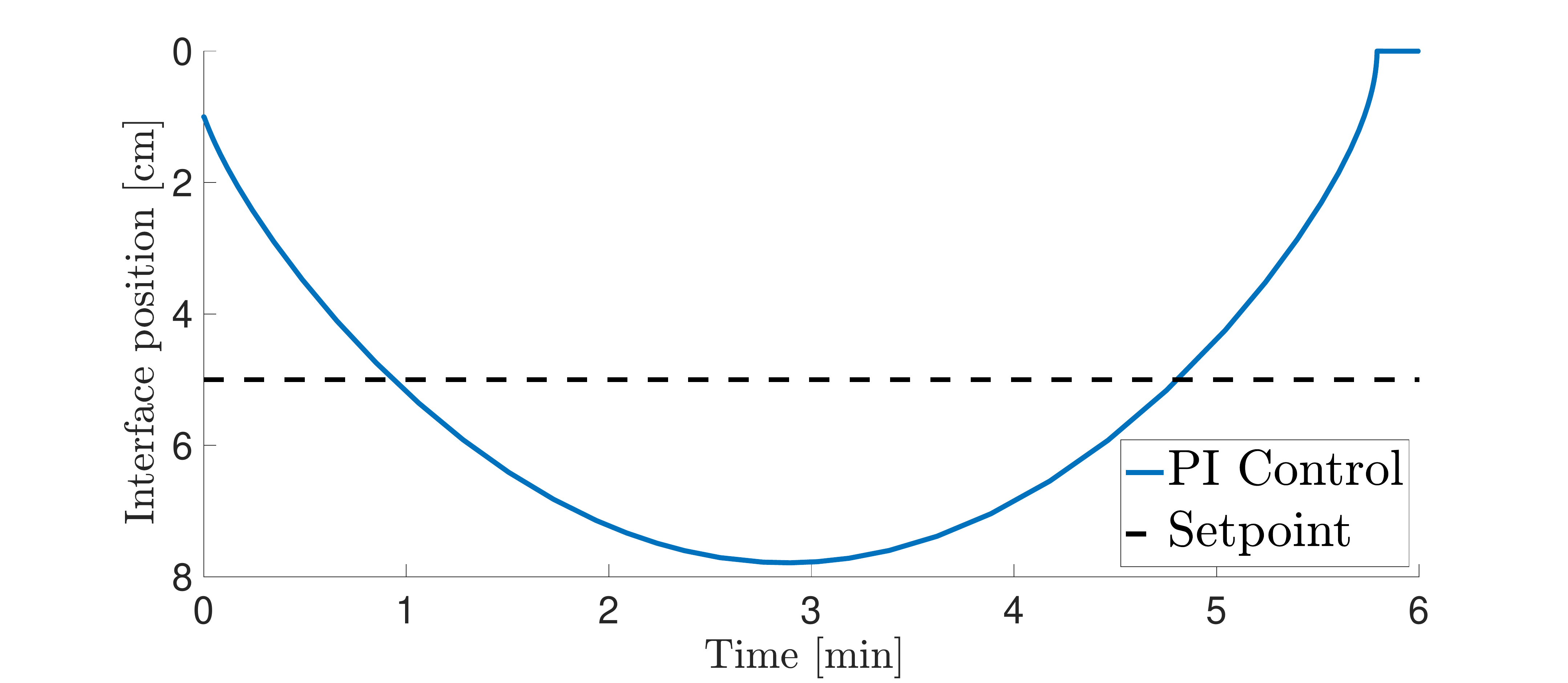}\\
\includegraphics[width=8.0cm]{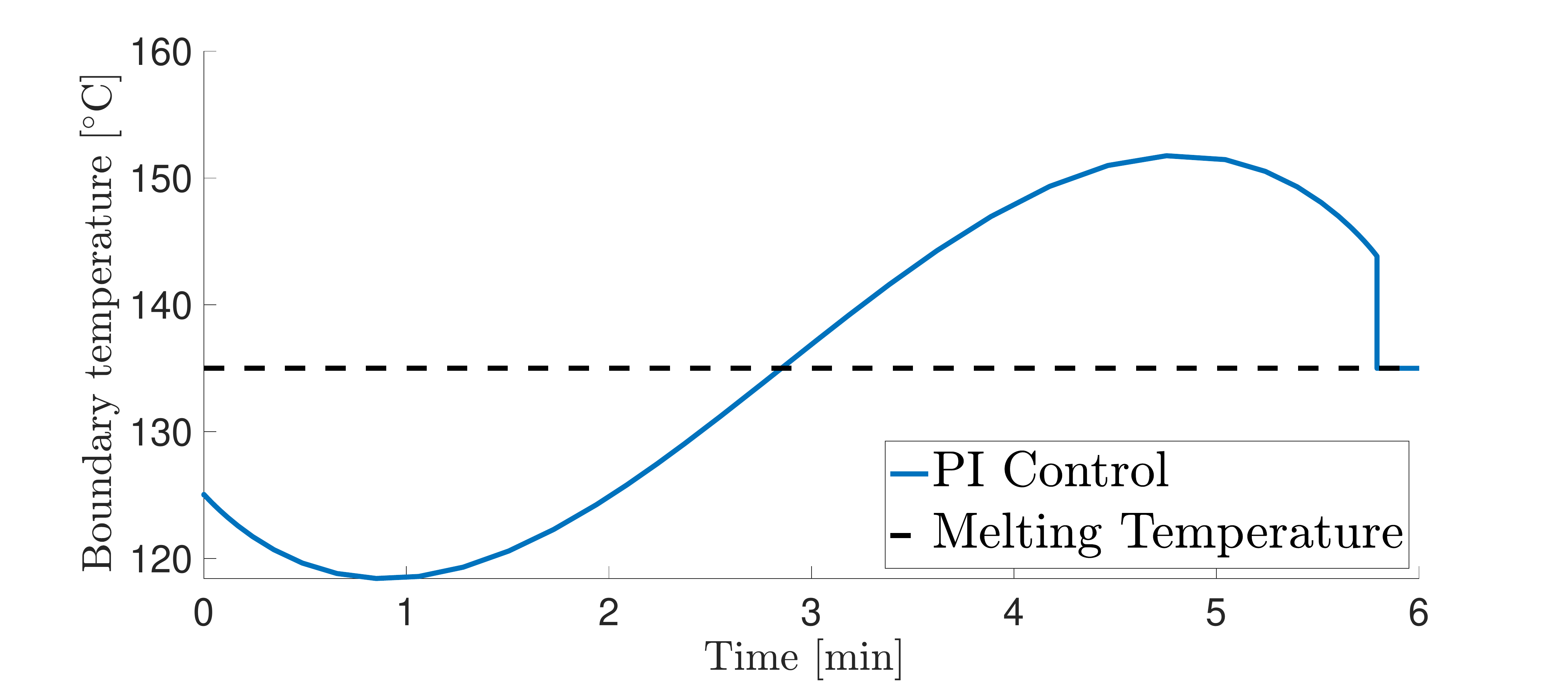}\\
\caption{The response with PI control. The boundary temperature is heated up and gets above the melting temperature, which violates the condition for the solid phase temperature. }
\label{fig:PI}
\end{center}
\end{figure}

\section{Conclusions}\label{conclusion}
In this paper, we designed an observer and the associated output feedback control to stabilize an ink production process of the screw extrusion based 3D printing. The steady-state analysis is provided by setting the setpoint as a , and the control design to stabilize the interface position is derived. The simulation results prove the effectiveness of the boundary feedback control law for some given screw speeds. 
\bibliographystyle{plain}

\end{document}